\newcommand{\R}{\mathbb{R}}
\newcommand{\C}{\mathbb{C}}
\newcommand{\N}{\mathbb{N}}
\newcommand{\Z}{\mathbb{Z}}
\newcommand{\T}{\mathbb{T}}
\newcommand{\Tr}{\operatorname{Tr}}
\newcommand{\Spec}{\operatorname{Spec}}
\DeclareMathOperator{\Span}{Span}
\newtheorem{theorem}{Theorem}[section]
\newtheorem{lemma}[theorem]{Lemma}
\newtheorem{proposition}[theorem]{Proposition}
\newtheorem{example}[theorem]{Example}
\theoremstyle{definition}
\newtheorem{definition}[theorem]{Definition}
\theoremstyle{remark}
\newtheorem{remark}[theorem]{Remark}
\numberwithin{equation}{section}
\newcommand{\tr}{\Tr}
\renewcommand{\hom}{\operatorname{Hom}}
\title[]{Global symbolic calculus of pseudo-differential operators on homogeneous vector bundles}
\author{Mitsuru Wilson}
\address[Mitsuru Wilson]{%
Institute of Mathematic\\
Polish Academy of Sciences\\
Warszawa\\
Poland}
\email{mitsuru.wilson@colorado.edu}
\subjclass[2000]{}
\keywords{Pseudo-differential operators, homogeneous vector bundle, global symbolic calculus of pseudo-differential operator,
global analysis}
\begin{document}

\begin{abstract}
A symbolic calculus for a pseudo-differential operators acting on sections of a homogeneous vector bundle  over a compact homogeneous space $G/H$ with compact $G$ and $H$ is developed. We realize the symbol of a pseudo-differential operator as a linear operator acting on corresponding irreducible unitary representations of $H$ valued in the algebra $C^\infty(G)$ of smooth functions. We write down how left invariant vector fields of $SU(2)$ act on the sections of homogeneous vector bundles associated to the fibration $\T\hookrightarrow SU(2)\rightarrow\C P^1$, which is known as the Hopf fibration. Lastly, we outline how functional calculus of a pseudo-differential operator can be computed using our calculus.
\end{abstract}

\maketitle

\tableofcontents

\section{\bf Introduction}
The resolvent operator $(A - \lambda)^{-1}$ plays a central role in the global analysis on a compact manifold $M$ associated with a linear elliptic differential operator $A$. In particular, one can easily obtain the corresponding heat operator $e^{-tA}$ for $t\in\R_+$ from the resolvent. Detailed knowledge of the terms in the asymptotic expansions of the integral kernels of the resolvent and the heat operator is of great value in calculating the asymptotic of eigenvalues \cite{gilkey2018invariance,widom1984spectral}, scalar curvature and Ricci curvature \cite{floricel2016ricci, gilkey2018invariance} and indices of Fredholm operators \cite{atiyah1973heat, getzler1983pseudodifferential}. In mathematical physics, they are used in quantum field theory \cite{dewitt1988dynamical}.

The intrinsic symbolic calculus of pseudo-differential operators was pioneered by Widom \cite{widom1980complete} and received further contributions \cite{SFGK1988resolvent}. Its importance lies in the generality of the operators to which it may be applied. Although Widom's approach can be applied to general smooth manifolds, Ruzhansky shed light on another approach to constructing intrinsic symbolic calculus for compact Lie groups wherein examples of toruses and $SU(2)$ are explicitly computed \cite{ruzhansky2008pseudo, ruzhansky2009pseudo, ruzhansky2010quantization, ruzhansky2013global}. The basic idea of Ruzhansky was the use of the representation theory of compact Lie groups  to decompose the algebra of smooth functions on a compact Lie group into Fourier series and develop the symbolic calculus for pseudo-differential operators valued in endomorphisms of each irreducible unitary representation. The symbols act on each Fourier component as an endomorphism. 
Subsequently, parametrices for pseudo-differential operators were computed in \cite{ruzhansky2009pseudo} and a version of the local index theorem was pursued in \cite{cardona2019index}.

Smooth functions on a compact Lie group can be viewed as smooth sections of the trivial line bundle. This paper outlines how to generalize the development thereafter \cite{ruzhansky2009pseudo} to operators acting on sections of a homogeneous vector bundle.

For a compact manifold $M$, the set of H\"ormander class pseudo-differential operators on $M$ is denoted by $\Psi^m(M)$. These operators are defined to be the class of operators in $\Psi^m(\R^n)$ in all local coordinate. Operators in $\Psi^m(\R^n)$ are characterized by the
symbols satisfying
\begin{equation}
\label{eq:Hormander.pseudo.differential.operator}
\left|\partial_\xi^\alpha\partial_x^\beta p(\xi, x)\right| \leq C(1+|\xi|)^{m - |\alpha|}
\end{equation}
for all multi-indices $\alpha,~\beta$ and all $\xi,~x\in\R^n$. Moreover, the definition of H\"ormander class pseudo-differential operators extend naturally to fibre-preserving operators on sections of vector bundles, which satisfy \eqref{eq:Hormander.pseudo.differential.operator} in every locally trivial coordinate. This class of operators is studied extensively, for example, in \cite{hormander1971fourier} by H\"ormander himself. In this paper, we study classical pseudo-differential operators on homogeneous vector bundles, although much of the theory can be proved for the general H\"ormander class pseudo-differential operators. Classical pseudo-differential operators are H\"ormander class pseudo-differential operators whose symbol can be written of the form 
\[
p(\xi, x) \sim \sum_{j\geq0}p_{m-j}(\xi, x)
\]
where each $p_k\in\Psi^k(\R^n)$ for each coordinate. 

Although their theory is mathematically intriguing, much of the differential geometric aspects are missing. This paper will explore more geometric approach than that of Ruzhansky. 
 Homogeneous vector bundles are quite important in mathematics. For instance, in \cite{bott1957homogeneous, bott1965index}, invariant differential operators on homogeneous vector bundles have been studied extensively and the their indices were computed in terms of the unitary representations.

In Section \ref{sec:pseudo.differential.calculus.homogeneous.bundles}, we first review the basic structure of homogeneous vector bundles. In this section, we also define the symbol of an operator, obtain a Peter-Weyl type decomposition for the space of sections of a homogeneous vector bundle and express each section as a Fourier series indexed by the unitary dual of $G$ (not the structure group $K\subset G$). 

The most important development in Section \ref{sec:pseudo.differential.calculus.homogeneous.bundles} is the composition formula \eqref{formula:composition}; in order to obtain this formula, we also give the difference operator construction \cite{fischer2015intrinsic} in the homogeneous vector bundle framework. 

In Section \ref{sec:parametrix}, we provide the parametrix formula for a classical pseudo-differential operator such that the symbol of the highest order is invertible. 

We present an example using the fibration $\T\hookrightarrow SU(2)\rightarrow\C P^1$ in Section \ref{sec:example.T.su(2)}. There, we compute how the left invariant vector fields act on the homogeneous vector bundles associated to the representations of $\T$ and their symbols. Finally, we outline how to deploy functional calculus for operators with well defined parametrix for some parameter in Section \ref{sec:remarks}.

\section*{\bf Acknowledgement}
This project has received funding from the European Union’s Horizon 2020 research and innovation programme under the Marie Skłodowska-Curie grant agreement No 691246. I thank Piotr Hajac for some fruitful discussions and supporting this project as the principal grant holder. 
I would also like to extend my appreciation to Alexander Gorokhovsky for some useful discussions directly leading to the completion of this project. 

\section{\bf Pseudo-differential calculus on homogeneous vector bundles}
\label{sec:pseudo.differential.calculus.homogeneous.bundles}

\noindent 
Symbolic calculus for pseudo-differential operators that act on smooth functions of homogeneous spaces was outlined also in \cite{ruzhansky2009pseudo}. A next generalization of this result is the symbolic calculus for pseudo-differential operators acting on sections of a homogeneous vector bundle. A section of a vector bundle $\pi:E\rightarrow M$ of rank $r$ over a smooth manifold $M$ is a function $u:M\rightarrow E$ such that $\pi(u(x))=x$ for all $x\in M$. We denote by $\Gamma(E)$ the space of all smooth sections. 

\subsection{Harmonic analysis on homogeneous vector bundle}
\label{subsec:homogeneous.vector.bundle}
We give a summary of the construction and properties of homogeneous vector bundles over $M = G/K$ where $G$ is a compact Lie group and $K$ is a closed subgroup. A vector bundle $E$ over $M$ is called a homogeneous vector bundle if $K$ acts on $E$ from the right such that 
\[
E_x \cdot k = E_{xk}~\mathrm{for}~x\in M,~k\in K
\]
and the mapping from $E_x$ to $E_{xk}$ induced by $k$ is linear for $k\in K$ and $x\in M$.

Let $(\tau,E_0)$ be a finite dimensional representation of $K$. $K$ acts on $G\times E_0$ by
$(g,v)\cdot k = (gk,\tau(k)^{-1}v)$. Then, 
\[
E = G\times_\tau E_0 := (G\times E_0)^K
\]
is is a homogeneous vector bundle, and it offers another description. 
We also say that $E$ is a homogeneous vector bundle associated to the representation $E_0$.

We further assume that $G/K$ is orientable. 
Let $E$ be a homogeneous vector bundle over $M=G/K$ associated to an irreducible unitary representation $E_0$ of $K$. 
Let $G$ act on $\Gamma(E)$ by $g\cdot u(x) = gu(g^{-1}x)$ for all $g\in G$, $u\in\Gamma(E)$, and $x\in M$. 
Then, the $G$-action on $\Gamma(E)$ extends to a unitary representation on the Hilbert space $L^2(E)$, which is obtained from $\Gamma(E)$ by completing it with respect to a $G$-invariant Hermitian inner product. Such inner product can be obtained easily. Let $\langle~,~\rangle$ be a $K$-invariant Hermitian inner product on $E_0$. Suppose $u_1,~u_2\in\Gamma(E)$ are viewed as maps from $G$ to $E_0$ such that $k^{-1}\cdot u_j(g) = u_j(gk)$. Finally, $(u_1,u_2) := \int_G\langle u_1, u_2\rangle dg$ defines a Hermitian inner product on $\Gamma(E)$ where the integration is with respect to the Haar measure. The action of $G$ with this inner product becomes unitary.

$L^2(E)$ is unitarily equivalent as a $G$-space to 
\begin{align}
\label{eq:unitary.equivalence}
L^2(G, E_0)^\tau := \left\{f\in L^2(G)\otimes E_0:  \tau(k)^{-1}f(g) = f(gk) \right\}
\end{align}
with the $G$ action given by $g_0\cdot f(g) := f(g^{-1}_0g)$. This unitary equivalence is given simply by 
\begin{align*}
A :	L^2(E)	& \longrightarrow  L^2(G, E_0)^\tau\\
	f		& \mapsto  A(f)(g) = g^{-1} f(gx),
\end{align*}

In fact, if $f:G\rightarrow \C^r$, then applying the Fourier transform to each component $f^\alpha(g)$ of $f(g)$ as a complex-valued function on $G$ expresses $f^\alpha$ as a Fourier series
\begin{align}
f^\alpha(g) = \sum_{\lambda\in\widehat G}\dim\lambda\tr(\lambda(g)\widehat {f^\alpha}(\lambda)).
\end{align}
Thus,
\begin{align}
f(g) = \sum_{\lambda\in\widehat G}\dim\lambda\tr(\lambda(g)\widehat {f}(\lambda))
\end{align}
as a vector valued Fourier transform.

This can be generalized to functions on $G$ valued in $E_0$ by taking components relative to an ordered orthonormal basis $\{e_k\}$ with respect to the Hermitian inner product $\langle~,~\rangle$. Let $\{e_k^*\}$ be the corresponding dual basis of $E^*_0$ and suppose $f\in L^2(G, E_0)^\tau$. Then, $x\mapsto e_k^*(f(x))$ is a complex valued function on $G$ and

\begin{align}
\label{eq:vector.valued.fourier.series}
f(x) = \sum_{k=1}^{\dim E_0} e_k^*(f(x))e_k = \sum_{k=1}^{\dim E_0}\sum_{\lambda\in\widehat G}\dim\lambda\tr(\lambda(g)\widehat {e_k^*(f)}(\lambda))e_k .
\end{align}
We will abuse the notation and write the Peter-Weyl decomposition of $f\in L^2(G, E_0)^\tau$ in \eqref{eq:vector.valued.fourier.series} as
\begin{align}
f(x) = \sum_{\lambda\in\widehat G}\dim\lambda\tr(\lambda(g)\widehat {f}(\lambda)),
\end{align}
which we will use frequently throughout the rest of this article. 


\subsection{\bf Pseudo-differential operators and their symbols}

Let $F\rightarrow G/K$ be another homogeneous vector bundle associated to an irreducible unitary representation $F_0$ of $K$, $\{f_b\}$ be an orthonormal basis of $F_0$.  In the subsequent sections, we consider continuous linear operators
\[
A:\Gamma_c(E)\longrightarrow\mathcal{D}'\left(\Gamma(F)\right).
\]
If, in addition, $A$ satisfies $g\cdot A(f(x)) = A(g\cdot f(x))$, then it is called an invariant pseudo-differential operator.

For $i = 0, 1$ we denote by $\pi_i: M \times M \rightarrow M$ the projections $(x_0, x_1) \rightarrow x_i$. Given complex
vector bundles $E_i\rightarrow M$, $i = 0, 1$, we define the vector bundle $E_0\boxtimes E_1 \rightarrow M\times M$ by $E_0\boxtimes E_1 := \pi_0^*E_0\otimes\pi_1^*E_1$.

We have the following theorem.

\begin{theorem}[Schwartz kernel theorem]
Let $M$ be a manifold and $E,~F\rightarrow M$ vector bundles, and let
\[
\hom(E, F)\rightarrow M\times M
\]
be the bundle whose fiber at $(x, y)\in M\times M$
is $Hom(E_x, F_y)$. If
\[
A : \Gamma_c(E) \longrightarrow\mathcal{D}'\left(\Gamma(F)\right)
\]
is a continuous linear mapping, there exists {\emph Schwartz kernel distribution}
$\mathcal K_A \in\mathcal{D}'(\Gamma(\hom(E,F)))$ of $A$ such that 
\[
\langle \psi, A(\phi)\rangle 
= \langle \psi\boxtimes\phi, \mathcal K_A\rangle 
\]
where
\[
\psi\boxtimes\phi\in \Gamma^\infty(M\times M),~~~(\psi\boxtimes\phi)(x,y)=\psi(x)\phi(y).
\]
\end{theorem}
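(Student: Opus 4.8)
The plan is to reduce the statement to the classical Schwartz kernel theorem for operators between spaces of smooth sections (or equivalently, to the scalar Schwartz kernel theorem on $\R^n$ via local trivializations and a partition of unity), since the bundle-valued version follows by a standard bookkeeping argument once the scalar case is in hand. First I would fix finite local trivializations: cover $M$ by finitely many charts $U_\alpha$ over which both $E$ and $F$ are trivial, choose a subordinate partition of unity $\{\chi_\alpha\}$, and choose local frames so that, relative to these data, $A$ is represented by a matrix $(A_\alpha^{\beta})$ of continuous linear maps $\Gamma_c(U_\alpha) \to \mathcal D'(U_\beta)$ between scalar distribution spaces. Each such scalar operator has, by the classical theorem, a Schwartz kernel in $\mathcal D'(U_\beta \times U_\alpha)$; reassembling these kernel-matrices using the frames and the partition of unity produces a distributional section of $\hom(E,F) \to M \times M$.

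Next I would verify the defining duality identity. The pairing $\langle \psi, A(\phi)\rangle$ for $\psi \in \Gamma_c(F^*)$ and $\phi \in \Gamma_c(E)$ unwinds, after inserting $\sum_\alpha \chi_\alpha = 1$ on both factors, into a finite sum of scalar pairings $\langle \psi_\beta^{(j)}, A_\alpha^\beta \phi_\alpha^{(i)}\rangle$, each of which equals the pairing of $\psi_\beta^{(j)} \boxtimes \phi_\alpha^{(i)}$ against the corresponding scalar kernel by the classical result. Summing back up and using that the frame-dual pairings exactly reconstitute the contraction $\hom(E,F) \otimes (F^* \boxtimes E) \to \C$ gives $\langle \psi \boxtimes \phi, \mathcal K_A\rangle$, as desired. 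I would then note that $\mathcal K_A$ is independent of all choices: any two candidate kernels pair identically with the dense subspace of test sections of the form $\psi \boxtimes \phi$, and such products span a dense subspace of $\Gamma_c(\hom(E,F)^*)$ on $M \times M$ (for instance by the Stone--Weierstrass-type density of algebraic tensors in the projective topology), so uniqueness follows; this simultaneously shows the locally assembled pieces glue consistently.

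Finally I would address continuity and well-definedness of the pairings: one must check that $A$, being continuous $\Gamma_c(E) \to \mathcal D'(\Gamma(F))$ by hypothesis, indeed induces continuous scalar operators in each trivialization (clear, since cutting off by $\chi_\alpha$ and taking frame components are continuous operations), and that the reassembled object lies in $\mathcal D'(\Gamma(\hom(E,F)))$ rather than in some larger space — again immediate from finiteness of the cover. The main obstacle, such as it is, is purely organizational: keeping the index bookkeeping straight so that the transformation laws of the local kernel matrices under change of frame are exactly those of a section of $\hom(E,F)$ over $M \times M$, i.e.\ that $\mathcal K_A$ transforms with $F_y$ on the left slot and $E_x^*$ on the right slot in accordance with the definition of the bundle $\hom(E,F) \to M \times M$ given above. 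There is no analytic difficulty beyond the classical theorem; the content is entirely in checking that the patching is natural with respect to the bundle structure. For this reason I would keep the write-up brief, citing the scalar Schwartz kernel theorem and emphasizing only the frame-change computation.
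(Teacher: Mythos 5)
Your outline is correct, but note that the paper does not actually prove this statement: immediately after the theorem it simply writes that ``a proof is contained, for example, in H\"ormander, Section 5.2,'' so there is no internal argument to compare against. What you have written is the standard way one upgrades the scalar Schwartz kernel theorem to the bundle setting --- finite cover trivializing $E$ and $F$, subordinate partition of unity $\{\chi_\alpha\}$, the matrix of continuous scalar operators $\phi\mapsto\chi_\beta A(\chi_\alpha\phi)$ in chosen frames, componentwise application of the classical theorem, and reassembly of the kernel as $\mathcal K_A=\sum_{\alpha,\beta}(\chi_\beta\boxtimes\chi_\alpha)K_{\beta\alpha}$, with uniqueness (hence consistency of the patching and frame-independence) coming from the density of finite sums $\sum\psi_i\boxtimes\phi_i$ in the test sections over $M\times M$. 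Two small points worth making explicit in a write-up: first, for the pairing $\langle\psi,A(\phi)\rangle$ to be the one the theorem asserts, $\psi$ must be taken in $\Gamma_c(F^*)$ (possibly density-twisted, though the paper implicitly fixes a smooth measure), and the contraction you invoke is precisely the identification of the fiber $\operatorname{Hom}(E_x,F_y)$ with $(E^*\boxtimes F)_{(x,y)}$; second, the frame-change computation you defer is genuinely the only content beyond the scalar theorem, since it is what certifies that the local kernel matrices transform as a section of $\hom(E,F)\to M\times M$ rather than merely as a tuple of distributions. Your proposal is a legitimate and complete-in-outline proof of a statement the paper only cites.
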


A proof is contained, for example, in \cite[Section 5.2]{hormander1971fourier}. Interpreted as a distribution, we can write the action of the operator $A$ on a section $s\in \Gamma_c(E)$ as
\begin{align}
A(s)(y) = \int_MK_A(x ,y)u(x)dx.
\end{align}
It follows from the Schwartz kernel theorem that every $G$-invariant continuous linear operator can be represented 
by convolution with an Hom$(E_0,F_0)$-valued distributional kernel $F$ on $G$ satisfying
\begin{align}
\label{eq:kernel.invariant}
F(k_1xk_2) = \tau(k_2^{-1})F(x)\tau(k_1^{-1}).
\end{align}
That is, if $g\cdot A\left(g\cdot u(x)\right) = A(u(x))$ is a continuous operator, then
\begin{align*}
A u(x) = \int_GF (y^{-1}x)u(y)dy,\quad u \in D_\tau (G, E_0)
\end{align*}
for some $F\in \mathcal D'_{\tau_{E_0},\tau_{F_0}}\left(G,~\operatorname{Hom}(E_0,F_0)\right)$ where $D'_{\tau_{E_0},\tau_{F_0}}\left(G,~\operatorname{Hom}(E_0,F_0)\right)$ consists of elements that satisfy \eqref{eq:kernel.invariant}. From now on, we study exclusively the $G$-invariant continuous linear operators 
$A:\Gamma(E)\rightarrow \mathcal D'\left(\Gamma(F)\right)$. We will also simply write $\Gamma(E)$ in place for $\Gamma_c(E)$ since the base space 
$G/K$ is always compact in this article. $G$-invariant continuous linear operators are also called homogeneous operators.

Let $\lambda : G \rightarrow U\left(\mathcal H_\lambda \right)$ be an irreducible unitary representation, $E$ and $F$ homogeneous vector bundles associated to irreducible unitary representations $E_0$ and $F_0$ of a closed subgroup $K\subset G$, respectively. 
The symbol of a homogeneous operator $A :\Gamma(E) \rightarrow\Gamma(F)$ at $x \in G$ and $\lambda \in \operatorname{Rep}(G)$ is defined as 
\begin{align}
\label{formula:symbol.definition}
\sigma_A(\lambda, x) :=\widehat k_x(\lambda) \in \operatorname{End}\left(\mathcal{H}_{\lambda}\right)\otimes\hom(E_x,F_x)
\end{align}
where $k_y(x) = K_A(x, y)$ is the Schwartz kernel of $A$. Hence, 
$$
\sigma_A(\lambda, x) = \int_G \lambda(x)^* K_A(x, y) dx
$$
in the sense of distributions, and the Schwartz kernel 
can be regained from the symbol as well: 
\[
K_A(x, y) = \sum_{\lambda \in \widehat G} \dim(\lambda) 
\Tr\left(\lambda(y) \sigma_A(\lambda, x)\right)
\]
where this equality is interpreted in the sense of distribution and the trace is taken over the indices of $\lambda$. 
The product $\lambda(y) \sigma_A(\lambda, x)$ is interpreted as $\lambda(y)$ being multiplied on each 
$\operatorname{End}\left(\mathcal{H}_{\lambda}\right)$-component of $\sigma_A(\lambda, x)$. 
The following proposition shows that a homogeneous operator $A$ can be represented by its symbol.

\begin{proposition}
Let $\sigma_A$ be the symbol of a homogeneous operator 
$A : \Gamma(E) \rightarrow \Gamma(F)$. Then

\begin{align}
\label{eq:representation.of.pseudo}
A u(x) = \sum_{\lambda \in \widehat G} \dim\lambda \Tr\left(\lambda(x) \sigma_A(\lambda,x) (\widehat u (\lambda))\right)
\end{align}
for every $u \in \Gamma(E)$ and $x \in G$.
\end{proposition}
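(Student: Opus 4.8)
The plan is to reduce to the Peter--Weyl expansion of $u$ and apply $A$ one spectral block at a time. First I would fix $u \in \Gamma(E)$, realised as a map $u\colon G \to E_0$ with $\tau(k)^{-1}u(g) = u(gk)$, and write $u = \sum_{\lambda \in \widehat G} u_\lambda$ with $u_\lambda(g) := \dim\lambda\,\Tr(\lambda(g)\widehat u(\lambda))$ as in \eqref{eq:vector.valued.fourier.series}. Before anything else I would check that each $u_\lambda$ is again a smooth section of $E$: shifting $g \mapsto gk^{-1}$ in $\widehat u(\lambda) = \int_G \lambda(g)^{*}u(g)\,dg$ and using the $\tau$-equivariance of $u$ together with the unitarity of $\lambda$ yields the relation $\lambda(k)\,\widehat u(\lambda) = \tau(k)^{-1}\widehat u(\lambda)$, in which $\lambda(k)$ acts on the $\End{\mathcal H_\lambda}$-factor and $\tau(k)^{-1}$ on the $E_0$-factor; feeding this into $u_\lambda(gk) = \dim\lambda\,\Tr(\lambda(g)\lambda(k)\widehat u(\lambda))$ and pulling $\tau(k)^{-1}$ out of the trace gives $u_\lambda(gk) = \tau(k)^{-1}u_\lambda(g)$. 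Since the Peter--Weyl series of a smooth section converges in $C^\infty$ and $A$ is continuous, I then have $Au = \sum_{\lambda \in \widehat G} Au_\lambda$ (in $\Gamma(F)$, or in $\mathcal D'(\Gamma(F))$ if $A$ only maps there), and it suffices to evaluate each $Au_\lambda$.

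For a single block I would start from the Schwartz kernel representation $Au_\lambda(x) = \int_G K_A(x,y)\,u_\lambda(y)\,dy$ --- the pairing of the distribution $K_A(x,\cdot)$ with the smooth section $u_\lambda$ --- substitute the formula for $u_\lambda$, and commute the $\hom(E_x,F_x)$-valued factor $K_A(x,y)$ past the trace over $\mathcal H_\lambda$, which is permissible because the two act on opposite tensor factors. That turns $Au_\lambda(x)$ into $\dim\lambda\,\Tr\bigl(\bigl[\int_G K_A(x,y)\lambda(y)\,dy\bigr]\widehat u(\lambda)\bigr)$. The only structural identity still needed is
\[
\int_G K_A(x,y)\lambda(y)\,dy = \lambda(x)\,\sigma_A(\lambda,x),
\]
which is nothing but the definition \eqref{formula:symbol.definition} of the symbol unwound (equivalently $\sigma_A(\lambda,x) = \lambda(x)^{*}\int_G K_A(x,y)\lambda(y)\,dy$, the homogeneous-bundle analogue of Ruzhansky's $\lambda(x)^{*}(A\lambda)(x)$); it is exactly at this point that the equivariance \eqref{eq:kernel.invariant} of the kernel is used, since it is what makes the right-hand side a well-defined element of $\End{\mathcal H_\lambda}\otimes\hom(E_x,F_x)$. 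Substituting gives $Au_\lambda(x) = \dim\lambda\,\Tr(\lambda(x)\sigma_A(\lambda,x)\widehat u(\lambda))$, and summing over $\widehat G$ is precisely \eqref{eq:representation.of.pseudo}.

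I expect the real work to be bookkeeping rather than anything conceptual: keeping straight which tensor factor ($\mathcal H_\lambda$ versus $E_0$ or $F_0$) each of $\lambda(g)$, $\tau(k)$, $K_A(x,y)$ and $\sigma_A(\lambda,x)$ acts on whenever something is moved through a trace, and verifying that the kernel equivariance \eqref{eq:kernel.invariant} is the correct compatibility needed for the symbol to be a genuine section of $\End{\mathcal H_\lambda}\otimes\hom(E,F)$. The one analytic ingredient --- interchanging $\sum_{\lambda \in \widehat G}$ with the distributional integral --- is standard, being just the $C^\infty$-convergence of the Peter--Weyl decomposition of a smooth section together with continuity of $A$, exactly as in Ruzhansky's scalar argument on a compact Lie group. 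If one prefers to bypass the block decomposition, one can instead plug \eqref{eq:vector.valued.fourier.series} directly into $Au(x) = \int_G K_A(x,y)u(y)\,dy$ and run the identical computation.
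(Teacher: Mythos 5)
Your proposal is correct, but it takes a genuinely different (and more self-contained) route than the paper. The paper's proof is a two-line reduction: it views $u$ as a $K$-equivariant $E_0$-valued function, observes that $\hom(E_0,F_0)$ is finite dimensional, and invokes the scalar case of Ruzhansky--Turunen componentwise, asserting the analysis is ``practically unaffected.'' You instead carry out the computation: you verify that each Peter--Weyl block $u_\lambda$ is again a $\tau$-equivariant section via the intertwining relation $\lambda(k)\widehat u(\lambda)=\tau(k)^{-1}\widehat u(\lambda)$, interchange $A$ with the sum using $C^\infty$-convergence and continuity, and then identify $\int_G K_A(x,y)\lambda(y)\,dy=\lambda(x)\sigma_A(\lambda,x)$ from the definition of the symbol. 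What your version buys is twofold. First, it makes explicit exactly where the two tensor factors $\End{\mathcal H_\lambda}$ and $\hom(E_x,F_x)$ interact and where the kernel equivariance \eqref{eq:kernel.invariant} is needed, which the paper's appeal to the scalar case leaves implicit. Second, your reading $\sigma_A(\lambda,x)=\lambda(x)^{*}\int_G K_A(x,y)\lambda(y)\,dy$ silently repairs the paper's displayed formula $\int_G\lambda(x)^{*}K_A(x,y)\,dx$ following \eqref{formula:symbol.definition}, which as written integrates over the wrong variable; your version is the one consistent with \eqref{formula:symbol} and the one that actually makes \eqref{eq:representation.of.pseudo} come out. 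The only caveat worth recording is that the identity $\int_G K_A(x,y)\lambda(y)\,dy=\lambda(x)\sigma_A(\lambda,x)$ need only be tested against coefficients $\widehat u(\lambda)$ satisfying the equivariance constraint, since those are the only ones arising from sections; this does not affect the validity of the argument, as you are in effect taking that identity as the working definition of the symbol.
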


\begin{proof}
Since $u$ can be represented as a $K$-invariant $E_0$-valued function on $G$, and $\sigma_A(\lambda,x)\in \operatorname{End}\left(\mathcal{H}_{\lambda}\right)\otimes\hom(E_x,F_x)$ acts on $\widehat u (\lambda)\in \operatorname{End}\left(\mathcal{H}_{\lambda}\right)\otimes E_0$. 
The case of $E_0=\C$, the trivial $K$-representation, with $K=\{e\}$, the trivial group, has been treated in \cite[Theorem 2.4]{ruzhansky2008pseudo}. 

Again, since $\Phi(\cdot):=\sigma_A(\lambda,\cdot)$ is an element in $\operatorname{End}\left(\mathcal{H}_{\lambda}\right)\otimes\Gamma(\hom(E,F))$, repeating the argument used for \eqref{eq:unitary.equivalence}, 
\begin{align}
\Phi\in\operatorname{End}\left(\mathcal{H}_{\lambda}\right)\otimes \big(C(G)\otimes\hom(E_0,F_0)\big)^\tau.
\end{align}
Since $\hom(E_0,F_0)$ is a finite dimensional vector space, the analysis in \cite[Theorem 2.4]{ruzhansky2008pseudo} would be practically unaffected. This completes the proof.
\end{proof}

\begin{definition}
For a symbol $\sigma_A$, the corresponding operator $A$ defined by \eqref{eq:representation.of.pseudo} will be also denoted by $\operatorname{Op}(\sigma_A)$. The operator defined by formula \eqref{eq:representation.of.pseudo} 
will be called the pseudo-differential operator associated to the symbol $\sigma_A$.
\end{definition}

Note again that, by \eqref{formula:symbol.definition}, $\sigma_A$ carries two sets of indices from $\operatorname{Hom}\big(E_x,F_x\big)$ and $\operatorname{End}(\mathcal H_\lambda)$. 
Elements in $\hom(E_x,F_x)$ can be represented as matrices relative to bases of $E_x$ and $F_x$. The following is a straightforward generalization of \cite[Theorem 10.4.6]{ruzhansky2009pseudo}.

\begin{proposition}
Suppose $\{e_a\}$ and $\{f_b\}$ be orthonormal bases of $E_0$ and $F_0$, respectively. Let  $\sigma_A(\lambda,x)$ be the symbol of a continuous operator
$A:\Gamma(E)\rightarrow \Gamma(F)$.
Then,
\begin{align}
\label{formula:symbol}
\sigma_A(\lambda, x)_{ab} = \big(\left(\lambda(x)^*\otimes f^*_b\right)A\left(\lambda(x)\otimes e_a\right)\big).
\end{align}
\end{proposition}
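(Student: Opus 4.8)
The plan is to feed the distinguished family of functions $g\mapsto\lambda(g)\otimes e_a$ into the representation formula \eqref{eq:representation.of.pseudo} and to read off the components of $\sigma_A(\lambda,x)$ one index pair at a time; this is the vector-bundle analogue of the unwinding in \cite[Theorem 10.4.6]{ruzhansky2009pseudo}, to which it reduces when $K=\{e\}$ and $E_0=F_0=\C$. First I would pin down the meaning of the right-hand side of \eqref{formula:symbol}: relative to $\{e_a\}$ and $\{f_b\}$ one has $\sigma_A(\lambda,x)=\sum_{a,b}\sigma_A(\lambda,x)_{ab}\otimes(f_b\otimes e_a^*)$ with each component $\sigma_A(\lambda,x)_{ab}\in\End{\mathcal H_\lambda}$, and $A(\lambda(x)\otimes e_a)$ is by convention the result of applying $A$ entrywise to the $\dim\mathcal H_\lambda\times\dim\mathcal H_\lambda$ matrix of $E_0$-valued functions $g\mapsto\lambda(g)_{ij}e_a$ and evaluating at the point $x$, after which $\lambda(x)^*\otimes f_b^*$ contracts the resulting matrix of $F_0$-valued functions back into $\End{\mathcal H_\lambda}$. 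This is legitimate because \eqref{eq:representation.of.pseudo} is meaningful verbatim for any smooth $E_0$-valued function on $G$, not only the $\tau$-equivariant ones, so both $A$ and the identity \eqref{eq:representation.of.pseudo} extend to such matrices of functions and may be applied entrywise.

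The computation then runs as follows. By the Schur orthogonality relations for the matrix coefficients of the irreducible $\lambda$, the Fourier coefficient $\widehat{(\lambda(\cdot)\otimes e_a)}(\mu)$ vanishes unless $\mu\cong\lambda$, and for $\mu=\lambda$ it is $\tfrac{1}{\dim\lambda}$ times the identity of $\mathcal H_\lambda$ tensored with $e_a$. Substituting into \eqref{eq:representation.of.pseudo}, the sum over $\widehat G$ collapses to the single term $\mu=\lambda$, the prefactor $\dim\lambda$ cancels the normalisation $\tfrac{1}{\dim\lambda}$, and the trace over the $\End{\mathcal H_\lambda}$-indices turns each summand into the matrix entries of $\lambda(x)\,\sigma_A(\lambda,x)_{ab}$; one is left with
\[
A(\lambda(\cdot)\otimes e_a)(x)=(\lambda(x)\otimes\id_{F_0})\sum_{b}\sigma_A(\lambda,x)_{ab}\otimes f_b .
\]
Applying $\lambda(x)^*\otimes f_b^*$, and using unitarity of $\lambda$ to cancel $\lambda(x)^*\lambda(x)$ together with orthonormality of $\{f_b\}$ to select the correct summand, isolates exactly $\sigma_A(\lambda,x)_{ab}$, which is \eqref{formula:symbol}.

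I expect the main difficulty to be bookkeeping rather than analysis: the two independent index families — the $\End{\mathcal H_\lambda}$-indices coming from the representation and the $\hom(E_x,F_x)$-indices coming from the bundle homomorphism — must be kept strictly separate throughout, and one must stay consistent about the convention under which $A(\lambda(x)\otimes e_a)$ is read as the entrywise extension of $A$ evaluated at the same $x$ that appears in $\sigma_A(\lambda,x)$. A secondary point worth one sentence is regularity: a priori $A$ only maps into $\mathcal D'(\Gamma(F))$, but $g\mapsto\lambda(g)\otimes e_a$ is smooth and $A$, being a classical pseudo-differential operator and hence pseudolocal, carries it to a continuous section, so pointwise evaluation at $x$ is legitimate. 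Alternatively one can carry out the whole argument at the level of the convolution kernel $F$ of \eqref{eq:kernel.invariant}: the substitution $z=y^{-1}x$ together with invariance of the Haar measure rewrites $A(\lambda(\cdot)\otimes e_a)(x)$ as $(\lambda(x)\otimes\id_{F_0})\int_G\lambda(z)^*\otimes F(z)e_a\,dz$, so that $(\lambda(x)^*\otimes f_b^*)A(\lambda(\cdot)\otimes e_a)(x)=\int_G\lambda(z)^*\,\langle F(z)e_a,f_b\rangle\,dz$, which one matches directly against the definition \eqref{formula:symbol.definition} of the symbol.
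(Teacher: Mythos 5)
Your proposal is correct and follows essentially the same route as the paper's own proof: plug the matrix coefficients of $\lambda$ tensored with $e_a$ into the quantization formula \eqref{eq:representation.of.pseudo}, use Schur orthogonality to collapse the Fourier sum to the single term $\mu=\lambda$ with the $\dim\lambda$ factors cancelling, and then use unitarity of $\lambda(x)$ together with the dual basis $f_b^*$ to isolate $\sigma_A(\lambda,x)_{ab}$. The only differences are presentational — you work basis-free where the paper tracks explicit indices — plus your added (and welcome) remarks on pseudolocality and the alternative derivation via the convolution kernel and \eqref{formula:symbol.definition}.
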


\begin{proof}
For each $a$ and $b$, and $m$ and $n$,
\begin{align*}
\sum_{\alpha=1}^{\dim\lambda}\lambda_{\alpha m}^*(x) f_b^*A(\lambda_{\alpha n} e_a)
&=\sum_{k=1}^{\dim\lambda}\lambda_{\alpha m}^*(x) f_b^*\sum_{\eta\in\hat{G}}\dim\eta\Tr(\eta(x)\sigma_A(\eta,x)(\widehat{\lambda_{\alpha n}}(\eta)e_a))\\
&=\sum_{k=1}^{\dim\lambda}\lambda_{\alpha m}^*(x)f_b^*\sum_{\eta\in\hat{G}}\dim\eta\sum_{i,j,\ell}\eta_{ij}(x)\sigma_A(\eta_{j\ell},x)\left(\widehat{\lambda_{kn}}(\eta)_{\ell i}e_a\right)\\
&=\sum_{k,j}\lambda_{\alpha m}^*(x)f_b^*\left(\dim\eta~\lambda_{kj}(x)\sigma_A(\lambda_{jn},x)(e_a)\frac{1}{\dim\eta}\right)\\
&=\sum_{k,j}\lambda_{\alpha m}^*(x)\lambda_{kj}(x)f_b^*\left(\sigma_A(\lambda_{jn},x)(e_a)\right)\\
&=f_b^*\left(\sigma_A(\lambda_{mn},x)(e_a)\right)\\
&=\sigma_A(\lambda_{mn},x)_{ab}
\end{align*}
where $\sigma_A(\lambda_{mn},x)$ simply means $m,n$ component of the $\operatorname{End}(\mathcal H_\lambda)$-indices.
\end{proof}

The representation \eqref{formula:symbol} of the symbol is relative to the choices of bases. The change of symbol under changes of bases is given simply by the change of basis theorem in linear algebra. For a completion, we state this fact with our notations.

\begin{proposition}[change of basis]
Suppose $\{e_a\}$ and $\{f_b\}$ be orthonormal bases of $E_0$ and $F_0$, respectively, and $\{e_a'\}$ and $\{f_b'\}$ be other orthonormal bases. Let  $\sigma_A(\lambda,x)$ be the symbol of a continuous operator
$A:\Gamma(E)\rightarrow \Gamma(F)$ relative to the first set of first bases and $\sigma_A'(\lambda,x)$ be the symbol of $A$ relative to the second set of bases.
Then, 
\begin{align}
\label{formula:change.of.basis}
\sigma_A'(\lambda, x) = \left(1\otimes V\right)\circ \sigma_A(\lambda, x)\circ\left(1\otimes U^*\right)
\end{align}
where $U:E_0\rightarrow E_0$ and $V:F_0\rightarrow F_0$ are the change of basis linear transformations $e_a\mapsto e'_a$ and $f_b\mapsto f'_b$, respectively. 
\end{proposition}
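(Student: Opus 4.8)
The plan is to reduce the change-of-basis formula \eqref{formula:change.of.basis} to the matrix-level description of the symbol already established in \eqref{formula:symbol}. First I would recall that, by \eqref{formula:symbol}, the $(a,b)$-entry of the symbol relative to the bases $\{e_a\}$, $\{f_b\}$ is $\sigma_A(\lambda,x)_{ab}=(\lambda(x)^*\otimes f_b^*)\,A\,(\lambda(x)\otimes e_a)$, and likewise $\sigma_A'(\lambda,x)_{ab}=(\lambda(x)^*\otimes f_b'^*)\,A\,(\lambda(x)\otimes e_a')$, where the $\operatorname{End}(\mathcal H_\lambda)$-indices are left implicit since they are untouched by the change of basis on $E_0$ and $F_0$. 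Writing $e_a'=\sum_c U_{ca}e_c$ for the change of basis $U$, and correspondingly $f_b'=\sum_d V_{db}f_d$ for $V$, the dual bases transform as $f_b'^*=\sum_d \overline{V_{db}}\,f_d^*=\sum_d (V^*)_{bd}f_d^*$ (using that the bases are orthonormal, so $U$ and $V$ are unitary).

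The next step is a direct substitution: using linearity of $A$ and of the pairings in both slots,
\begin{align*}
\sigma_A'(\lambda,x)_{ab}
&=\Bigl(\lambda(x)^*\otimes \sum_d (V^*)_{bd}f_d^*\Bigr)\,A\,\Bigl(\lambda(x)\otimes\sum_c U_{ca}e_c\Bigr)\\
&=\sum_{c,d}(V^*)_{bd}\,U_{ca}\,\bigl(\lambda(x)^*\otimes f_d^*\bigr)\,A\,\bigl(\lambda(x)\otimes e_c\bigr)\\
&=\sum_{c,d}V_{db}^*\,\sigma_A(\lambda,x)_{dc}\,U_{ca}.
\end{align*}
Reading the last line as a matrix identity in the $\hom(E_0,F_0)$-indices (with the $\operatorname{End}(\mathcal H_\lambda)$-slot riding along passively, which is exactly what the factor $1\otimes(-)$ records), this says $\sigma_A'(\lambda,x)=(1\otimes V)\,\sigma_A(\lambda,x)\,(1\otimes U^*)$, after transposing the index conventions to match: the map $U^*\colon E_0\to E_0$ is the one sending the matrix of a vector in the $\{e_a'\}$-basis to its matrix in the $\{e_a\}$-basis, and $V\colon F_0\to F_0$ converts $\{f_b\}$-coordinates of the output to $\{f_b'\}$-coordinates, which is precisely the composition in \eqref{formula:change.of.basis}.

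I expect no serious obstacle here; the only thing requiring care is bookkeeping of conventions — whether $U$ denotes the matrix with columns $e_a'$ expressed in $\{e_a\}$ or its inverse/adjoint, and on which side each factor acts — so that the asymmetry between $V$ and $U^*$ in \eqref{formula:change.of.basis} comes out correctly. Concretely, the asymmetry is forced by the fact that $E_0$ appears in the \emph{domain} of $A$ (so a change of basis there acts by precomposition, hence by the inverse, which equals $U^*$ for unitary $U$) while $F_0$ appears in the \emph{codomain} (so a change of basis there acts by postcomposition, directly by $V$). Since this is a restatement of the standard change-of-basis formula for a linear map combined with the already-proved \eqref{formula:symbol}, the proof is essentially the computation above together with one sentence pinning down the conventions; I would not belabor it further.
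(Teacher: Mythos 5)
Your strategy---substitute the primed bases into \eqref{formula:symbol} and expand by linearity---is exactly the right one; the paper offers no argument for this proposition beyond invoking ``the change of basis theorem in linear algebra,'' and your computation is the honest way to pin the formula down. The expansions $e_a'=\sum_c U_{ca}e_c$ and $f_b'^*=\sum_d (V^*)_{bd}f_d^*$ and the resulting double sum are correct (modulo a harmless transposition: the middle line produces $(\lambda(x)^*\otimes f_d^*)A(\lambda(x)\otimes e_c)$, which in the paper's convention is $\sigma_A(\lambda,x)_{cd}$, not $\sigma_A(\lambda,x)_{dc}$).

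The gap is in the last step. Reading your final line in the standard matrix convention $[\Sigma]_{dc}=f_d^*(\Sigma e_c)$, you have
\[
\sigma_A'(\lambda,x)_{ab}=\sum_{c,d}(V^*)_{bd}[\Sigma]_{dc}U_{ca}=\left(V^*[\Sigma]\,U\right)_{ba},
\]
i.e.\ $\sigma_A'(\lambda,x)=(1\otimes V^*)\,\sigma_A(\lambda,x)\,(1\otimes U)$ --- not $(1\otimes V)\,\sigma_A(\lambda,x)\,(1\otimes U^*)$ --- under the stated convention $Ue_a=e_a'$, $Vf_b=f_b'$. Your closing heuristic places the inverse on the wrong slot: to express a fixed map in new bases one \emph{pre}composes on the domain with the new-to-old coordinate change, whose matrix is $[U]$ itself (its columns are the $e_a'$ written in $\{e_a\}$-coordinates), and \emph{post}composes on the codomain with the old-to-new change, which is $[V]^{-1}=V^*$; so the adjoint lands on $V$, not on $U$. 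As written, your displayed computation and your stated conclusion contradict each other. To repair this you must either reinterpret $U$ and $V$ as the maps $e_a'\mapsto e_a$ and $f_b'\mapsto f_b$ (and rename the coefficients in your display accordingly), or record that with the proposition's stated convention the formula should carry the adjoints on the other factors. Flag explicitly which convention you adopt rather than asserting the two expressions coincide.
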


Note that using the identification $\Gamma\left(E\right)\cong \big(L^2(G)\otimes E_0\big)^K$ in \eqref{eq:unitary.equivalence}, $f\in\big(L^2(G)\otimes E_0\big)^K$ carries induced action of the Lie algebra $\mathfrak{g}$ of $G$ given by the usual formula:
\begin{align}
X\cdot f(g) := \frac{d}{dt}\Big|_{t=0}f(g\exp(tX)),\qquad g\in G,~X\in\mathfrak{g},~u\in\big(L^2(G)\otimes E_0\big)^K.
\end{align}

This action of the Lie algebra $\mathfrak g$ extends to an action of the universal enveloping algebra $U(\mathfrak g)$ of $\mathfrak g$. 
Consider the action of the operator
\begin{equation}
\label{eq:laplacian}
\mathcal L := - X_1^2 - \ldots-X_{\dim G}^2
\end{equation}
where $\left\{X_1, \ldots, X_{\dim G}\right\}$ is  an orthonormal basis of $\mathfrak g$ with respect to some bi-invariant bilinear form $B(\cdot,\cdot)$. $\mathcal L$ does not depend on a choice of orthonormal bases and $\mathcal L(\lambda_{jk}(x)) = c_\lambda\lambda_{jk}(x)$ for some $c_\lambda\in\C$, which depends only on the class of the unitary irreducible representation $\lambda\in\widehat G$. We denote by $\langle\lambda\rangle:=\left(1 + |c_\lambda|^2\right)^{1/2}$.

Denote $\Xi=(I + \mathcal L)^{1/2}$. Then, $\Xi^s\in\operatorname{End}\big(\Gamma(E)\big)$ and $\Xi^s\in\mathcal D'\left(\Gamma\left(\operatorname{End}(E)\right)\right)$ for every $s\in\R$. Let us define 
\begin{align}
\label{eq:sobolev.space}
\langle u, v\rangle_s := \left(\Xi^s u, \Xi^s v\right)_{L^2(E)}
\end{align}
for $u, v \in\Gamma(E)$.
The completion of $\Gamma(E)$ with respect to the norm 
$u \mapsto\Vert u\Vert_s=\langle u, u\rangle_s^{1/2}$ lends us a definition of the Sobolev space $H^s(E)$ of order $s \in\R$. It is easy to check that the operator $\Xi^r$ defines an isomorphism $H^s(E) \rightarrow H^{s - r}(E)$ for every $r, s \in \R$. 

In view of the identification \eqref{eq:unitary.equivalence}, it can be proved that $H^s(E)$ is unitarily equivalent to $(H^s(G)\otimes E_0)^K$.  Note that $\Xi^s\in\Psi^s(E)$. Using this identification,

\begin{proposition}[Symbols for some operators]
Let $A$ be a pseudo-differential operator, $X\in\mathfrak g$ and $\phi\in C^\infty(G)$. Then,
\begin{align} 
\sigma_{\phi A}(\lambda, x) & = \phi(x) \sigma_A(\lambda, x) \\ 
\sigma_{X\circ A}(\lambda, x) &=\sigma_X(\lambda, x) \sigma_A(\lambda, x)+\left(X\sigma_A\right)(\lambda, x) 
\end{align}
\end{proposition}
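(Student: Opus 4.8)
The plan is to read both formulas off the representation \eqref{eq:representation.of.pseudo}, using that $A\mapsto\sigma_A$ and $\sigma\mapsto\operatorname{Op}(\sigma)$ are mutually inverse: the Proposition preceding the Definition gives $\operatorname{Op}(\sigma_A)=A$, while \eqref{formula:symbol} recovers the symbol from the operator, so $\sigma_{\operatorname{Op}(\sigma)}=\sigma$. Hence in each case it suffices to exhibit a symbol $\Sigma(\lambda,x)$ for which $\operatorname{Op}(\Sigma)$ agrees with $\phi A$, respectively with $X\circ A$, and then invoke this injectivity.

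\emph{First identity.} Since $\phi A$ is $A$ followed by multiplication by the scalar function $\phi$, we have $(\phi A)u(x)=\phi(x)(Au)(x)$ pointwise. The scalar $\phi(x)$ commutes with the $\operatorname{End}(\mathcal H_\lambda)$- and $\hom(E_x,F_x)$-factors and may be moved inside the trace, so substituting \eqref{eq:representation.of.pseudo} yields $(\phi A)u(x)=\sum_{\lambda\in\widehat G}\dim\lambda\,\Tr\big(\lambda(x)[\phi(x)\sigma_A(\lambda,x)]\widehat u(\lambda)\big)=\operatorname{Op}\big(\phi(x)\sigma_A\big)u(x)$, whence $\sigma_{\phi A}(\lambda,x)=\phi(x)\sigma_A(\lambda,x)$. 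The same conclusion also follows entrywise from \eqref{formula:symbol}, since applying $\phi A$ to $\lambda(\cdot)\otimes e_a$ and evaluating at $x$ merely multiplies the value of $A(\lambda(\cdot)\otimes e_a)$ at $x$ by $\phi(x)$.

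\emph{Second identity.} First I would compute the symbol of the left-invariant vector field $X$ itself. Feeding the identity $u(x)=\sum_\lambda\dim\lambda\,\Tr(\lambda(x)\widehat u(\lambda))$ (i.e. $\operatorname{Op}(\id)=\id$) through $X$ and using left-invariance, $X_x\lambda(x)=\tfrac{d}{dt}\big|_{t=0}\lambda(x\exp tX)=\lambda(x)\,d\lambda(X)$, where $d\lambda(X):=\tfrac{d}{dt}\big|_{t=0}\lambda(\exp tX)$; hence $Xu(x)=\sum_\lambda\dim\lambda\,\Tr(\lambda(x)\,d\lambda(X)\,\widehat u(\lambda))$, so $\sigma_X(\lambda,x)=d\lambda(X)$ (tensored with the identity on the fibre). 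Next, writing $(X\circ A)u(x)=X_x\big[(Au)(x)\big]$, substituting \eqref{eq:representation.of.pseudo}, differentiating the series term by term, and applying the Leibniz rule together with $X_x\lambda(x)=\lambda(x)\sigma_X(\lambda,x)$ gives $X_x\big[\lambda(x)\sigma_A(\lambda,x)\big]=\lambda(x)\big[\sigma_X(\lambda,x)\sigma_A(\lambda,x)+(X\sigma_A)(\lambda,x)\big]$. Taking traces and summing puts $(X\circ A)u$ in the form \eqref{eq:representation.of.pseudo} with symbol $\sigma_X(\lambda,x)\sigma_A(\lambda,x)+(X\sigma_A)(\lambda,x)$, which is the assertion.

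The one genuinely delicate point — and the main obstacle — is the term-by-term differentiation of the Fourier series, i.e. the interchange of $X_x$ with $\sum_{\lambda\in\widehat G}$. For smooth $u$ this is routine: by Peter--Weyl the coefficients $\widehat u(\lambda)$ decay faster than any power of $\langle\lambda\rangle^{-1}$ while $\|\lambda(x)\|$ and $\|d\lambda(X)\|$ grow at most polynomially, so the differentiated series converges absolutely and uniformly, and by density of $\Gamma(E)$ the resulting operator identities hold in general (alternatively one argues throughout in $\mathcal D'(\Gamma(F))$). A minor bookkeeping remark is that $X$ need not preserve $K$-equivariance of sections, so $X\circ A$ should be read in the $\big(L^2(G)\otimes F_0\big)$-picture of \eqref{eq:unitary.equivalence}, where \eqref{eq:representation.of.pseudo} and \eqref{formula:symbol} still apply; this leaves the computation above unchanged.
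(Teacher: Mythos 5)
Your proof is correct and follows essentially the same route as the paper: pull the scalar $\phi(x)$ out of \eqref{formula:symbol} for the first identity, and for the second apply the Leibniz rule term by term to the series \eqref{eq:representation.of.pseudo}, inserting $\lambda(x)\lambda(x)^*$ to recognize $\lambda(x)^*X\lambda(x)=\sigma_X(\lambda,x)$. You are in fact somewhat more careful than the paper, which leaves the term-by-term differentiation and the final identification of the symbol implicit.
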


		\begin{proof}
Suppose $A$ is an operator and $\phi\in C^\infty(G)$, $e_1,\ldots,e_{\dim E_0}$ and $f_1,\ldots,f_{\dim F_0}$ be bases of $E_0$ and $F_0$. Then, since
	\begin{align*}
\lambda^*(x)\otimes f^*_b\left(\phi(x) A \lambda(x)\otimes e_a\right)
& = \phi(x)\lambda^*(x)\otimes f^*_b\left(A \lambda(x)\otimes e_a\right),
	\end{align*}
$\sigma_{\phi(x)A}(\lambda,x) = \phi(x)\sigma_A(\lambda,x)$.

Since
$X\big(\lambda(x)\otimes\sigma_A(\lambda,x)\big) = X\big(\lambda(x)\big)\otimes\sigma_A(\lambda,x) + \lambda(x)\otimes X\big(\sigma_A(\lambda,x)\big)$, 
	\begin{align*}
X \circ A f(x) 
& = X \sum_{\lambda \in \widehat G} \dim\lambda \Tr\left(\lambda(x)\otimes\sigma_A(\lambda, x) \widehat f(\lambda)\right) \\
& = \sum_{\lambda\in \widehat G} \dim\lambda \Tr\left(\left(X \lambda\right)(x)\otimes\sigma_A(\lambda, x) \widehat f(\lambda)\right) \\ 
& \qquad+\sum_{\lambda\in \widehat G} \dim\lambda \Tr\left(\lambda(x)\otimes X\left(\sigma_A(\lambda, x)\right) \widehat f(\lambda)\right) \\
& = \sum_{\lambda\in \widehat G} \dim\lambda \Tr\left(\lambda(x)\lambda^*(x)X\left(\lambda\right)(x)\otimes\sigma_A(\lambda, x) \widehat f(\lambda)\right) \\ 
& \qquad+\sum_{\lambda\in \widehat G} \dim\lambda \Tr\left(\lambda(x)\otimes X\left(\sigma_A(\lambda, x)\right) \widehat f(\lambda)\right) 
	\end{align*}
		\end{proof}

Suppose $G\subset U(n)$ is a connected compact matrix Lie group. Then, the symbol $\sigma_{X}$ of a left invariant vector field $X\in\mathfrak g$ can be computed relatively simply. Let $\exp:\mathfrak g\rightarrow G$ be the exponential map and $\lambda:G\rightarrow$U$(n)$ a representation so that 
		\begin{align}
	\nonumber
\lambda(g)^*X\left(\lambda(g)\right)	& = \frac{d}{dt}\Big\vert_{t=0}\lambda(g)^*\lambda\left(g\exp(tX)\right)\\
	\nonumber
							& = \frac{d}{dt}\Big\vert_{t=0}\lambda\left(\exp(tX)\right)\\
	\label{eq:symbol.of.vector.fields}
							& = \lambda_*X
		\end{align}
at the identity of $e\in G$. In fact, $\lambda(e)=I_{\dim\lambda}$, so the symbol is essentially $X$ put in block diagonal form. In particular, if $G=SU(2)$, the inclusion $\lambda:SU(2)\rightarrow U(2)$ is the unique fundamental representation with $V_\lambda = \C^2$. A basis of the complexification $\mathfrak{su}(2)_\C$ of $\mathfrak{su}(2)$ is given by
\begin{align}
\label{eq:pauli.matrices}
H = 
\begin{pmatrix}
1 & 0\\
0 & -1
\end{pmatrix}, \qquad
X = 
\begin{pmatrix}
0 & 1\\
0 & 0
\end{pmatrix}, \qquad
Y = 
\begin{pmatrix}
0 & 0\\
1 & 0
\end{pmatrix}
\end{align}
and a generic element of $SU(2)$ can be written as 
\[
\begin{pmatrix}
\alpha & -\bar \beta\\
\beta & \bar\alpha
\end{pmatrix},\qquad \vert\alpha\vert^2 + \vert\beta\vert^2 = 1.
\]
Thus, 
 $\sigma_A(\lambda,x) = A$ for all $A\in\mathfrak{su}(2)$. In general, the representation of $SU(2)$ is given by the symmetric tensor power of $\C^2$, so it suffices to compute the symbol of left invariant vector fields at the fundamental representation. That is,
\begin{align}
\label{eq:representation.of.su(2)}
\sigma_A(\operatorname{Sym}^k(\lambda),x)	& = \operatorname{Sym}^k(\lambda)_*(A) \\
\nonumber
									& = \Lambda^k(\lambda_*)(A)
\end{align}
where $\Lambda^k(\lambda_*):\mathfrak{su}(2) \rightarrow \mathfrak{u}\big(\operatorname{End}(\Lambda^k(\C^2))\big)$ is the $k$th antisymmetric tensor power representation, which is known to be irreducible unitary and these are all the irreducible unitary representations of $\mathfrak{su}(2)$.

From the general representation theoretic viewpoint, every irreducible unitary representation of a connected compact Lie group is contained in the tensor products of the fundamental representations. We will use this example in Section \ref{sec:example.T.su(2)}.

As had been studied in literature \cite{SFGK1988resolvent, widom1980complete}, the formula which highlights the study of pseudo-differential calculus is the composition formula for the symbols. 
Let $H_0$ be yet another irreducible unitary representation of $K$ and $H$ the associated vector bundle. 
Consider 
\begin{align}
\Gamma(E)\overset{A}{\longrightarrow}\Gamma(F)\overset{B}{\longrightarrow} \Gamma(H)
\end{align}
where the domains and the ranges of $A$ and $B$ makes sense.
It is natural to question how to express the symbol $\sigma_{AB}$ of the composed operator $AB$ in terms of individual symbols $\sigma_A$ and $\sigma_{B}$. In the alternative formulation by Widom, the symbol of an operator is some function in two continuous variables and the symbol of the composition of two operators is expressed as a linear combination of products of derivatives of each symbol. However, in our setting, we would have to reformulate the notion of differentiability in the representation variable, which will be done in the next subsection.

\subsection{Difference operators}

First, we recall some results from \cite{fischer2015intrinsic}.
A symbol can be viewed as a collection $\sigma = \{\sigma(\lambda,x)\in\operatorname{End}(\mathcal{H}_{\lambda})\otimes\hom(E_x,F_x): \lambda \in \widehat{G},x\in G\}$. Moreover, a norm on symbol can be defined using the Hilbert-Schmit inner product:

\begin{align}
\langle\sigma,\tau\rangle = \Tr\left(\tau^*\sigma\right)
\end{align}
where $\Tr(\cdot)$ above is the matrix trace. The operator norm $\Vert\cdot\Vert_{op}$ of an $m\times n$ matrix $A$ is defined as 
\begin{equation}
\label{eq:operator.norm.of.symbols}
\|A\|_{op} :=\sup\left\{\|A x\| : x \in \C^n,\|x\|\leq 1\right\}.
\end{equation}

Note that each $\psi\in C^\infty(G)$ defines a left convolution $L(\psi)$ and a right convolution $R(\psi)$, which act on on $\Gamma(E)$:
\begin{align}
L(\psi)(f)(x) = \psi*f(x) := \int_G \psi(y)f(y^{-1}x)dy
\end{align}
and
\begin{align}
R(\psi)(f)(x) = f*\psi(x):= \int_G f(xy) \psi(y) dy.
\end{align}

It is easy to show that 
\begin{equation}
\|L(\psi)\|_{B\left(L^2(E)\right)} 
= \|R(\psi)\|_{B\left(L^2(E)\right)}
= \sup _{\lambda \in \operatorname{Rep}(G)}\|\widehat{\psi}(\lambda)\|_{op}.
\end{equation}

On the other hand, the operator associated to $\sigma$ is the operator $\operatorname{Op}(\sigma)$ defined on $L^2(E)$ by 
\begin{align}
\operatorname{Op}(\sigma)\left(u(x)\right) = \sum_{\lambda \in \widehat G} \dim\lambda\Tr(\lambda(x) \sigma(\lambda, x) \widehat{u}(\lambda)), \quad u \in L^2(E), x \in G.
\end{align}

We also recall the notion of difference operators and of classes of symbols in \cite{fischer2015intrinsic}, which 
a refinement of its initial discovery made in \cite{ruzhansky2008pseudo}. For each $\lambda, \mu \in \operatorname{Rep}(G)$ and $\sigma \in \Sigma(G)$, we define the linear mapping 
$\triangle_\lambda \sigma(\mu)$ on $\mathcal{H}_\lambda\otimes\mathcal{H}_{\mu}$  by
\begin{align}
\triangle_{\lambda} \sigma(\mu) :=\sigma(\lambda \otimes \mu) - \sigma\left(I_{\mathcal H_\lambda} \otimes \mu\right).
\end{align}

We also define the iterated difference operators as follows. For any $a \in \N$ and $\tau=\left(\lambda_1, \ldots, \lambda_a\right) \in \operatorname{Rep}(G)^a$, we write 
\begin{align}
\triangle^\tau :=\triangle_{\lambda_1} \ldots \triangle_{\lambda_a}.
\end{align}

If $\xi \in \operatorname{Rep}(G)$ and $\sigma \in \Sigma$, then $\triangle^\tau\sigma(\xi,x)$ is an element of 
\begin{align}
\label{space:endmorphism.tensor.representations}
\operatorname{End}\left(\mathcal H_\xi^{\otimes\tau}\right)\otimes&\hom(E_x,F_x) \\
\nonumber
&:= \operatorname{End}\left(\mathcal H_{\lambda_1} \otimes \ldots \otimes \mathcal H_{\lambda_a} \otimes \mathcal H_\xi \right)\otimes \hom(E_x,F_x).
\end{align}

\begin{definition}[\cite{fischer2015intrinsic}]
Let $m \in \R$. The set $S^m(E,F)$ is the space of all the symbols $\sigma = \{\sigma(\lambda, x)\in \operatorname{End}\left(\mathcal H_\lambda\right)\otimes \hom(E_x,F_x):(\lambda, x) \in \widehat G \times G\}$, which are smooth in $x$ such that for each $\tau \in \operatorname{Rep}(G)^a$ and $X \in \operatorname{Diff^k}(G)$ there exists $C>0$ satisfying 
\begin{align}
\forall(\lambda, x) \in\widehat G\times G \qquad\left\Vert X \triangle^\tau\sigma(\lambda, x)\right\Vert_{op}\leq C\langle\lambda\rangle^{\frac{m - a}{2}}.
\end{align}
The norm $\Vert\cdot\Vert_{op}$ is in the sense of the operator norm as in \eqref{eq:operator.norm.of.symbols}.
We say that a symbol is smoothing when it is in 
\begin{align}
S^{-\infty}(E,F)=\bigcap_{m \in \mathbb{R}} S^{m}(E,F).
\end{align}
\end{definition}
Now we are ready to move onto the composition formula in the next subsection.

\subsection{Composition formula}

In general, we can prove the following composition theorem.

\begin{theorem}[Composition formula]
\label{theorem:composition.formula}
Let $m_1, m_2 \in \R$ and $\rho>\delta \geq 0$.
Let $E_0$, $F_0$ and $H_0$ be irreducible unitary representations of $K$ and $E$, $F$ and $H$ be corresponding associated vector bundles to the principal bundle $G\rightarrow G/K$. Suppose
\begin{align}
A:\Gamma(E)\longrightarrow \Gamma(F)
\end{align}
and 
\begin{align}
B:\Gamma(F)\longrightarrow\Gamma(H)
\end{align}
are continuous linear maps such that $A\left(\Gamma(E)\right) \subset \Gamma(F)$ with symbols $\sigma_A$ and $\sigma_B$ and they satisfy
\begin{align}
\left\|\triangle_\lambda^\alpha\sigma_A(\lambda, x)\right\|_{op} & \leq C_\alpha\langle\lambda\rangle^{m_1 - \rho|\alpha|} \\
\left\|X^\beta\sigma_B(\lambda, x)\right\|_{op} & \leq C_\beta\langle\lambda\rangle^{m_2 + \delta|\beta|}.
\end{align} 
for all multi-indices $\alpha$ and $\beta$, uniformly in $x\in G$ and $\lambda\in\widehat G$. Then,
\begin{align}
\label{formula:composition}
\sigma_{A B}(x, \lambda) \sim \sum_{\alpha \geq 0} \frac{1}{\alpha !}\left(\triangle_{\lambda}^\alpha \sigma_A\right)(x, \lambda) X^{(\alpha)} \sigma_B(x, \lambda).
\end{align}
\end{theorem}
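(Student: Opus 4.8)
The plan is to reduce the bundle-valued composition formula to the scalar case already established in \cite{ruzhansky2009pseudo, fischer2015intrinsic}, exploiting the fact that the extra $\hom(E_x,F_x)$ and $\hom(F_x,H_x)$ indices are finite-dimensional and play a purely passive, tensorial role. Concretely, I would first fix orthonormal bases $\{e_a\}$ of $E_0$, $\{f_b\}$ of $F_0$, $\{h_c\}$ of $H_0$, so that by \eqref{formula:symbol} the symbols $\sigma_A(\lambda,x)_{ab}$ and $\sigma_B(\lambda,x)_{bc}$ become honest matrices of scalar symbols in $\operatorname{End}(\mathcal H_\lambda)$, and the composition $AB$ has matrix entries $\sigma_{AB}(\lambda,x)_{ac}$. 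The difference operators $\triangle_\lambda^\alpha$ and the left-invariant differential operators $X^{(\alpha)}$ act only on the $\operatorname{End}(\mathcal H_\lambda)$-factor and on the $x$-dependence, respectively, hence commute with taking matrix entries; thus the hypotheses on $\|\triangle_\lambda^\alpha\sigma_A\|_{op}$ and $\|X^\beta\sigma_B\|_{op}$ descend to each entry.

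Next I would carry out the core estimate exactly as in the scalar proof. Write $Bu(x)=\sum_\mu \dim\mu\,\Tr(\mu(x)\sigma_B(\mu,x)\widehat u(\mu))$ and then apply $A$, using the representation formula \eqref{eq:representation.of.pseudo} together with the symbol rules from the Proposition on symbols of $\phi A$ and $X\circ A$ — in particular $\sigma_{\phi A}=\phi\,\sigma_A$ and the Leibniz-type rule — to peel off the $x$-dependence of $\sigma_B$ one factor at a time. The key algebraic identity is that multiplication by the matrix coefficients $\mu(x)$ on the right of $\sigma_A$, followed by the Fourier transform, produces exactly the difference operator $\triangle_\lambda$: this is the content of the relation $\widehat{\mu_{ij}\,\cdot}\,(\lambda) = (\lambda\otimes\mu)$-shift, which is where $\sigma_A(\lambda\otimes\mu)-\sigma_A(I_{\mathcal H_\lambda}\otimes\mu)$ enters. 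Taylor-expanding $\sigma_B(\mu,y)$ around $y=x$ in the group variable, with the Taylor monomials realized by the $X^{(\alpha)}$ acting through the exponential map, then yields the asymptotic series $\sum_\alpha \frac{1}{\alpha!}(\triangle_\lambda^\alpha\sigma_A)(x,\lambda)\,X^{(\alpha)}\sigma_B(x,\lambda)$ termwise, with the tail controlled by the symbol estimates: each application of $\triangle_\lambda$ gains a factor $\langle\lambda\rangle^{-\rho}$ from $\sigma_A$ while each $X$ costs at most $\langle\lambda\rangle^{\delta}$ from $\sigma_B$, and since $\rho>\delta$ the orders of the remainder terms strictly decrease, giving genuine asymptotic convergence in the sense of $\bigcap_m S^m$.

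The bundle-theoretic wrapping is then just bookkeeping: the matrix product $\sigma_{AB}(\lambda,x)_{ac}=\sum_b \sigma_A(\lambda,x)_{ab}\,\sigma_B(\lambda,x)_{bc}$ reassembles, basis-independently by the change-of-basis Proposition \eqref{formula:change.of.basis}, into the composition of $\hom(E_x,F_x)$- and $\hom(F_x,H_x)$-valued operators landing in $\hom(E_x,H_x)$, tensored against the $\operatorname{End}(\mathcal H_\lambda^{\otimes\alpha}\otimes\mathcal H_\lambda)$-valued factor; since all intertwiners involved are $K$-equivariant and bounded, the operator-norm estimates pass through the finite sum over $b$ unchanged up to a constant depending only on $\dim F_0$.

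The main obstacle I anticipate is making the Taylor-expansion-plus-remainder argument fully rigorous in the noncommutative group setting: one must justify interchanging the infinite Fourier sum over $\mu$ with the finite Taylor expansion and control the remainder integral $\int_G \cdots$ uniformly in $\lambda$, which in the scalar case \cite{ruzhansky2009pseudo} requires a careful Sobolev-type estimate on the kernel of $B$ and the summability of $\langle\mu\rangle^{-N}\dim\mu$ for large $N$; here one additionally needs that the finite-rank bundle factors do not spoil these estimates, which follows because $\hom(E_0,F_0)$ is finite-dimensional so all its norms are equivalent, but this should be stated explicitly. A secondary point worth isolating as a lemma is the precise identity relating right multiplication by matrix coefficients of $\mu$ to the difference operator $\triangle_\lambda$ acting on $\sigma_A(\lambda\otimes\mu,x)$, since the $\hom$-valued setting means this identity now holds componentwise and must be checked to be compatible with the tensor structure in \eqref{space:endmorphism.tensor.representations}.
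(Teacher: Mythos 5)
Your proposal is correct and follows essentially the same route as the paper: the paper's own proof is simply the reduction to the scalar composition formula of Ruzhansky--Turunen by viewing sections as $E_0$-valued equivariant functions and using that the fibres are finite dimensional, exactly the componentwise reduction you describe. You go further by also sketching the internals of the scalar argument (the link between multiplication by matrix coefficients and $\triangle_\lambda$, the Taylor expansion, and the $\rho>\delta$ remainder estimates), which the paper merely cites; this is additional detail, not a different method.
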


\begin{proof}

In view of \cite[Theorem 8.3]{ruzhansky2008pseudo}, it suffices to reduce this case to the case of scalar functions. This is achieved by representing $f\in \Gamma(E)$ as an element in $E_0$-valued function on $G$. Again, the the scalar valued (trivial line bundle) case of this formula had been proved in \cite[Theorem 8.3]{ruzhansky2008pseudo}, which proves it for each component of $f$. Since the fibre $E_0$ is finite dimensional, that result extends trivially.
\end{proof}

\begin{remark}
Theorem \ref{theorem:composition.formula} is a slight improvement of the treatment in \cite[Chapter 13]{ruzhansky2009pseudo} for the case of scalar valued functions on homogeneous spaces because our case for $H=\{e\}$ and $E_0=\C$ the trivial representation would reduce to their case.
\end{remark}

\subsection{Sobolev spaces}

Recall that Sobolev spaces were defined in Section \ref{subsec:homogeneous.vector.bundle} by completion of $\Gamma(E)$ with the inner product in \eqref{eq:sobolev.space}.

\begin{theorem}
\label{theorem:boundedness.in.sobolev.space}
Let $G$ be a compact Lie group, $\mathfrak g$ its Lie algebra and $A:\Gamma(E)\rightarrow\Gamma(F)$ be an operator with symbol $\sigma_A$. 
Suppose that there are constants $m, C_\alpha\in\R$ such that 
\begin{align}
\label{inequality:sobolev.space}
\left\Vert X^\alpha\sigma_A(\lambda, x)\right\Vert\leq C_\alpha\langle\lambda\rangle^m
\end{align}
holds for all $x \in G$, $\lambda\in$Rep$(G)$ and all multi-indices $\alpha$ where $X^\alpha = X_1^{\alpha_1} \cdots X_{\dim G}^{\alpha_{\dim G}}$ and 
$\left\{X_1, \ldots, X_{\dim G}\right\}$ is a basis of $\mathfrak g$. Then, $A$ is a bounded linear operator 
\begin{align}
H^s(E) \longrightarrow H^{s - m}(F)
\end{align}
for all $s \in \R$.
\end{theorem}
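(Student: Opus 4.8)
The plan is to prove this by reducing the assertion first to the case $s=0$ and then to $L^2$--boundedness of a scalar pseudo-differential operator on $G$, where the statement is classical.

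\emph{Reduction to $s=0$ and to scalar functions.} Write $\Xi_E$, $\Xi_F$ for $(I+\mathcal L)^{1/2}$ on $\Gamma(E)$, $\Gamma(F)$. Since $\Xi_E^{-s}$ is an $x$--independent Fourier multiplier --- it acts on the isotypic component $\mathcal H_\lambda$ by a scalar $\rho_{-s}(\lambda)$, and $\Xi_E^{-s}\in\Psi^{-s}(E)$ gives $|\rho_{-s}(\lambda)|\le C\langle\lambda\rangle^{-s}$ --- applying it before $A$ only rescales Fourier coefficients, so by \eqref{eq:representation.of.pseudo} one has $\sigma_{A\Xi_E^{-s}}(\lambda,x)=\rho_{-s}(\lambda)\,\sigma_A(\lambda,x)$; hence $A\Xi_E^{-s}$ still obeys a bound of the form \eqref{inequality:sobolev.space}, with $m$ replaced by $m-s$. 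As $\Xi_E^{-s}\colon L^2(E)\to H^s(E)$ is an isomorphism, $A\colon H^s(E)\to H^{s-m}(F)$ is bounded iff $A\Xi_E^{-s}\colon L^2(E)\to H^{s-m}(F)$ is, so we may assume $s=0$. Next, using the unitary equivalences $L^2(E)\cong(L^2(G)\otimes E_0)^K$ and $H^{-m}(F)\cong(H^{-m}(G)\otimes F_0)^K$ and orthonormal bases $\{e_a\}$ of $E_0$, $\{f_b\}$ of $F_0$, the operator $A$ becomes a finite matrix $(A_{ba})$ of operators $C^\infty(G)\to\mathcal D'(G)$ with symbols $\sigma_A(\lambda,x)_{ba}\in\End{\mathcal H_\lambda}$; since a compression does not increase operator norm, $\|X^\alpha\sigma_A(\lambda,x)_{ba}\|_{op}\le C_\alpha\langle\lambda\rangle^{m}$ for all $\alpha$. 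Because the $L^2(E)$-- and $H^{-m}(F)$--norms of a section are the $\ell^2$--sums of the $L^2(G)$-- and $H^{-m}(G)$--norms of its finitely many components, it is enough to prove: a scalar operator $B\colon C^\infty(G)\to\mathcal D'(G)$ with $\|X^\alpha\sigma_B(\lambda,x)\|_{op}\le C_\alpha\langle\lambda\rangle^{m}$ for all multi-indices $\alpha$ is bounded $L^2(G)\to H^{-m}(G)$. One could here simply cite the scalar Sobolev-boundedness theorem of \cite{ruzhansky2008pseudo,ruzhansky2009pseudo}; otherwise I would argue as follows.

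\emph{The scalar case.} Expand the symbol in $x$ by Peter--Weyl, $\sigma_B(\lambda,x)=\sum_{\nu\in\widehat G}\dim\nu\,\Tr\bigl(\nu(x)\widehat{\sigma_B}(\lambda,\nu)\bigr)$, and write $\Tr(\nu(x)\widehat{\sigma_B}(\lambda,\nu))=\sum_{i,j}\nu_{ij}(x)\,\widehat{\sigma_B}(\lambda,\nu)_{ji}$ with each $\widehat{\sigma_B}(\lambda,\nu)_{ji}\in\End{\mathcal H_\lambda}$. Substituting into \eqref{eq:representation.of.pseudo} gives
\begin{align*}
B=\sum_{\nu\in\widehat G}\dim\nu\sum_{i,j}M_{\nu_{ij}}\circ P_{\nu,j,i},
\end{align*}
where $M_{\nu_{ij}}$ is multiplication by the matrix coefficient $\nu_{ij}$ and $P_{\nu,j,i}:=\operatorname{Op}\bigl(\lambda\mapsto\widehat{\sigma_B}(\lambda,\nu)_{ji}\bigr)$ has the $x$--independent symbol $\widehat{\sigma_B}(\lambda,\nu)_{ji}$. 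Then $P_{\nu,j,i}$ is a Fourier multiplier, so by the Plancherel description of the Sobolev norms $\|P_{\nu,j,i}\|_{L^2(G)\to H^{-m}(G)}=\sup_\lambda\langle\lambda\rangle^{-m}\,\|\widehat{\sigma_B}(\lambda,\nu)_{ji}\|_{op}$; applying $\mathcal L^N$ in the $x$--variable and using $\|X^\alpha\sigma_B(\lambda,x)\|_{op}\le C_\alpha\langle\lambda\rangle^{m}$ for all $\alpha$ yields $\|\widehat{\sigma_B}(\lambda,\nu)\|_{op}\le C_N\langle\lambda\rangle^{m}\langle\nu\rangle^{-N}$ uniformly in $\lambda$, for every $N$, hence $\|P_{\nu,j,i}\|_{L^2\to H^{-m}}\le C_N\langle\nu\rangle^{-N}$. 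Moreover $|\nu_{ij}|\le1$ and $\|X^\alpha\nu_{ij}\|_\infty=O(\langle\nu\rangle^{|\alpha|})$, so $M_{\nu_{ij}}$ is bounded on $H^{-m}(G)$ with norm $O(\langle\nu\rangle^{D})$ for some $D=D(m)$ (Leibniz when $-m\ge0$, $L^2$--duality when $-m<0$, interpolation in between). Therefore $\|B\|_{L^2\to H^{-m}}\le C_N\sum_{\nu\in\widehat G}(\dim\nu)^{3}\langle\nu\rangle^{D-N}$, which is finite once $N$ is taken large, by the Weyl-type bounds $\dim\nu\lesssim\langle\nu\rangle^{\dim G}$ and $\#\{\nu\in\widehat G:\langle\nu\rangle\le T\}\lesssim T^{\dim G}$.

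\emph{Main obstacle.} The real content is confined to the scalar case, and even there the estimates are soft: since \emph{all} $x$--derivatives of $\sigma_B$ are controlled, the Fourier coefficients $\widehat{\sigma_B}(\lambda,\nu)$ decay faster than any power of $\langle\nu\rangle$, so no sharp Calder\'on--Vaillancourt count on $G$ is needed and the series defining $B$ converges automatically. The two points needing a little care are the polynomial-in-$\langle\nu\rangle$ bound for multiplication by matrix coefficients on $H^{-m}(G)$ at arbitrary real $m$, and the Weyl-type counting that tames the $\nu$--sum; the remaining work --- the two reductions --- is routine, the finite-dimensionality of the fibres $E_0$ and $F_0$ making it essentially trivial.
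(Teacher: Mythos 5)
Your proposal is correct, and its skeleton coincides with the paper's: the paper likewise conjugates by powers of $\Xi$ to reduce to $L^2\to L^2$ boundedness (using the composition formula to see that the conjugated symbol and all its $X^\beta$-derivatives are uniformly bounded) and reduces to scalar operators by compressing the symbol with the dual bases $f_b^*\,\cdot\,e_a$ of the finite-dimensional fibres. Where you genuinely diverge is at the base case: the paper simply cites the scalar Sobolev/$L^2$-boundedness theorems of Ruzhansky--Turunen (Theorem 3.2 of the cited global-boundedness paper, resp.\ Theorem 10.5.5 of their book), which are Calder\'on--Vaillancourt-type results requiring only finitely many derivatives of the symbol ($|\alpha|\le k$ with $2k>\dim G$, as in Lemma \ref{lemma:L2.estimate.of.Sobolev}), whereas you give a self-contained ``soft'' proof by expanding $\sigma_B(\lambda,\cdot)$ in Peter--Weyl, writing $B$ as a rapidly convergent sum of multiplication operators composed with Fourier multipliers, and paying for the convergence with the rapid decay $\|\widehat{\sigma_B}(\lambda,\nu)\|_{op}\le C_N\langle\lambda\rangle^m\langle\nu\rangle^{-N}$ that the all-derivatives hypothesis provides. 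Your route buys independence from the external reference and is elementary (Plancherel, Leibniz/duality/interpolation for multiplication on $H^{-m}(G)$, and a Weyl-type count of $\widehat G$), but it is strictly weaker in one respect: it cannot recover the sharper finite-derivative version of the statement that the cited theorems give and that the paper's own Lemma \ref{lemma:L2.estimate.of.Sobolev} records. Both of your flagged ``points needing care'' (polynomial bounds for $M_{\nu_{ij}}$ on $H^{-m}(G)$ for real $m$, and the counting of $\widehat G$) are indeed the only nontrivial estimates, and both are standard; I see no gap.
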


\begin{proof}
The case of $H=\{e\}$ and $E_0=F_0=\C$ was shown in \cite[Theorem 3.2]{ruzhansky2013global}. Suppose $\{e_k\}$ and $\{f_\ell\}$ are bases of $E_0$ and $F_0$, respectively, with corresponding dual bases $\{e^*_k\}$ and $\{f^*_\ell\}$. Note that 
$B(e^*_a(f(x))):=f_b^*\circ A(e^*_a(f(x))e_a)$ defines a linear operator, which satisfies \eqref{inequality:sobolev.space}. Thus, it defines a bounded linear operator from $H^s(G)\rightarrow H^{s-m}$ \cite{ruzhansky2013global}.
Using the identification of the Sobolev spaces \eqref{eq:unitary.equivalence} and to the linear combination using the operator $B$, it can be seen that 
\begin{align}
A : (H^s(G)\otimes E_0)^K \longrightarrow (H^{s-m}(G)\otimes F_0)^K
\end{align}
is bounded.
\end{proof}


\begin{definition}[Symbol classes $\Sigma^m(E,F) $]
Let $m \in \R$. We denote $\sigma_A\in\Sigma_0^m(E,F)$ if the singular support of the map $y \mapsto K_A(x, y)$ is in $\{e\}$ and if 
\begin{align}
\left\Vert\triangle_\lambda^\alpha X^\beta\sigma_A(\lambda, x)\right\Vert_{op}
\leq C_{A \alpha \beta m}\langle\lambda\rangle^{m-|\alpha|}
\end{align}
for all $x \in G$, all multi-indices $\alpha, \beta$ and all $\lambda\in$Rep$(G)$. Moreover, we say that $\sigma_A\in\Sigma_{k+1}^m(E,F)$ if and only if
\begin{align}
\sigma_A \in\Sigma_k^m(E,F) \\ 
\sigma_{\partial_j}\sigma_A - \sigma_A\sigma_{\partial_j}\in\Sigma_k^m(E,F)\\ 
\left(\triangle_\lambda^\gamma \sigma_A\right) \sigma_{\partial_j} \in \Sigma_k^{m+1-|\gamma|}(E,F) 
\end{align}
for all $|\gamma|>0$ and $1 \leq j \leq \dim(G)$. Let 
\begin{align}
\Sigma^m(E,F) :=\bigcap_{k=0}^{\infty} \Sigma_k^m(E,F).
\end{align}
\end{definition}

\begin{theorem}
Suppose $G$ is a compact Lie group with a closed subgroup $K$ and $m \in \R$. 
If $E$ and $F$ are homogeneous vector bundles over $G/K$ associated to irreducible representations 
$E_0$ and $F_0$ of $K$, then, $A \in \Psi^m(E,F)$ if and only if $\sigma_A\in\Sigma^m(E,F)$.
\end{theorem}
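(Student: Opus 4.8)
The plan is to deduce both implications at once from the scalar case --- a scalar operator is of H\"ormander order $m$ if and only if its global symbol, indexed by $\widehat G$, satisfies the corresponding estimates --- which is established in \cite[Chapter 13]{ruzhansky2009pseudo}, the case $K=\{e\}$ being the characterization on $G$ of \cite{ruzhansky2008pseudo, ruzhansky2013global}. Fix orthonormal bases $\{e_a\}$ of $E_0$ and $\{f_b\}$ of $F_0$ with dual bases $\{e_a^*\},\{f_b^*\}$. Under the identification \eqref{eq:unitary.equivalence} the operator attached to $\sigma_A$ by \eqref{eq:representation.of.pseudo} makes sense on all of $C^\infty(G)\otimes E_0$, and \eqref{formula:symbol} exhibits $\sigma_A$ as the assembly of its scalar matrix entries $\sigma_A(\lambda,x)_{ab}=\bigl(\lambda(x)^*\otimes f_b^*\bigr)A\bigl(\lambda(x)\otimes e_a\bigr)$, each of which is the global symbol of the scalar operator $A_{ab}\colon\psi\mapsto f_b^*\!\bigl(A(\psi\, e_a)\bigr)$.

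The core of the reduction is that $\sigma_A\in\Sigma^m(E,F)$ if and only if every entry $\sigma_A(\cdot,\cdot)_{ab}$ belongs to the scalar symbol class $\Sigma^m$. This is immediate: the difference operators $\triangle_\lambda^\alpha$ act on the $\operatorname{End}(\mathcal H_\lambda)$ slot and the operators $X^\beta$ on the base variable $x$, both commuting with the constant finite-dimensional $\hom(E_0,F_0)$ structure, while for any $T\in\operatorname{End}(\mathcal H_\lambda)\otimes\hom(E_0,F_0)$ the quantities $\|T\|_{op}$ and $\max_{a,b}\|T_{ab}\|_{op}$ are equivalent up to a constant depending only on $\dim E_0$ and $\dim F_0$; the singular-support condition on $y\mapsto K_A(x,y)$ decouples over the entries in the same way. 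On the operator side, since the H\"ormander class is a local notion, $A$ being of order $m$ is equivalent --- in the manner of the proofs of Theorem \ref{theorem:composition.formula} and Theorem \ref{theorem:boundedness.in.sobolev.space}, where finite-dimensionality of the fibres lets the scalar results of \cite{ruzhansky2008pseudo, ruzhansky2009pseudo, ruzhansky2013global} be applied componentwise --- to all the scalar operators $A_{ab}$ being of order $m$. Applying the scalar characterization to each $A_{ab}$ and chaining the equivalences gives $A\in\Psi^m(E,F)\iff\sigma_A(\cdot,\cdot)_{ab}\in\Sigma^m\ \text{for all}\ a,b\iff\sigma_A\in\Sigma^m(E,F)$.

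The genuine difficulty therefore lies entirely in the scalar characterization --- reconciling the local H\"ormander estimates on $G/K$ with the global estimates indexed by $\widehat G$ and the difference operators --- which is the substance of \cite[Chapter 13]{ruzhansky2009pseudo}; everything specific to the bundle-valued setting is absorbed, as above, by finite-dimensionality of $E_0$ and $F_0$ together with the elementary norm comparison.
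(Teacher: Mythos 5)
Your proposal is correct and follows essentially the same route as the paper: the paper's own proof is a one-line reduction to the scalar characterization of \cite[Theorem 10.9.6]{ruzhansky2009pseudo} ``by repeating the argument of Theorem \ref{theorem:boundedness.in.sobolev.space}'', i.e.\ exactly the componentwise decomposition over bases of $E_0$ and $F_0$ that you carry out, with the bundle-valued case absorbed by finite-dimensionality of the fibres. Your write-up merely makes explicit the norm comparison and the entrywise equivalences that the paper leaves implicit, while deferring the same hard scalar step to \cite{ruzhansky2009pseudo}.
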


\begin{proof}
By repeating the argument of Theorem \ref{theorem:boundedness.in.sobolev.space}, this theorem reduces to Theorem 10.9.6 in \cite{ruzhansky2009pseudo}.
\end{proof}

\section{\bf Application and parametrics}
\label{sec:parametrix}

In this section, we are concerned with the case $E=F$, so we set 
$\Psi^m(E):=\Psi^m(E,E)$

\begin{definition}
Let $A\in \Psi^m(E)$ and $z\in\C$. $B_z \in\Psi^{-m}(E)$ will be
called a resolvent parametric of $A$ if it is a parametric of $A - z$ 
(that is, $B_z(A  - z)$ differs from the identity up to a smoothing operator).
\end{definition}

In this section, we present an explicit computation of the asymptotic expansion of the intrinsic symbol of a resolvent parametric of $A$ in terms of the representation of $G$ and the terms in the asymptotic expansion of the intrinsic symbol of $A$.

\begin{theorem}[Parametrix]
\label{theorem:parametrix}
Let $\sigma_{A_j} \in \Sigma^{m - j}(E)$, and set 
\begin{align}
\sigma_A(\lambda, x) \sim \sum_{j=0}^\infty\sigma_{A_j}(\lambda, x).
\end{align}
Assume that $\sigma_{A_0}(\lambda, x) = \sigma_{B_0}(\lambda, x)^{-1}$ is an invertible matrix for every $x\in G$ and $\lambda\in$Rep$(G)$, and that $B_0=\mathrm{Op}\left(\sigma_{B_0}\right) \in \Psi^{- m}(E)$. 
Then, there exists $\sigma_B \in \Sigma^{-m}(E)$ such that $I-B A$ and $I-A B$ are smoothing operators. Moreover, 
\begin{align}
\sigma_B(\lambda, x) \sim \sum_{k=0}^\infty\sigma_{B_k}(\lambda, x)
\end{align}
where the operators $B_k \in \Psi^{-m-k}(E)$ are determined by the recursion
\begin{align}
\label{eq:composition.formula}
\sigma_{B_N}(\lambda, x) = - \sigma_{B_0}(\lambda, x) \sum_{k=0}^{N - 1} \sum_{j=0}^{N-k} \sum_{|\gamma|=N-j-k} \frac{1}{\gamma !}\triangle_{\lambda}^{\gamma} \sigma_{B_k}(\lambda, x) X^{\gamma} \sigma_{A_j}(\lambda, x).
\end{align}
\end{theorem}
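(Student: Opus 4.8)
The plan is to construct $\sigma_B$ by the standard parametrix bootstrapping argument, using the composition formula \eqref{formula:composition} as the engine. The idea is that we want $BA \sim I$, so applying the composition formula to the (not-yet-known) symbol $\sigma_B \sim \sum_k \sigma_{B_k}$ against $\sigma_A \sim \sum_j \sigma_{A_j}$ and demanding that the result equal the identity symbol order-by-order produces exactly the recursion \eqref{eq:composition.formula}. Concretely, the composition formula gives
\[
\sigma_{BA}(\lambda,x) \sim \sum_{\gamma \geq 0}\frac{1}{\gamma!}\left(\triangle_\lambda^\gamma \sigma_B\right)(\lambda,x)\, X^{\gamma}\sigma_A(\lambda,x),
\]
and substituting the asymptotic expansions of $\sigma_B$ and $\sigma_A$ and collecting all terms of homogeneity order $-N$ (noting that $\triangle_\lambda^\gamma$ lowers order by $|\gamma|$, that $\sigma_{B_k}$ has order $-m-k$, and $\sigma_{A_j}$ has order $m-j$, so a term $\triangle_\lambda^\gamma \sigma_{B_k}\, X^\gamma \sigma_{A_j}$ has order $-k-j-|\gamma|$) gives the equation
\[
0 = \sum_{k+j+|\gamma| = N}\frac{1}{\gamma!}\triangle_\lambda^\gamma \sigma_{B_k}(\lambda,x)\, X^\gamma \sigma_{A_j}(\lambda,x)
\]
for each $N \geq 1$, while $N=0$ gives $\sigma_{B_0}\sigma_{A_0} = I$. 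Isolating the $k=N$, $j=0$, $\gamma=0$ term (which is $\sigma_{B_N}\sigma_{A_0}$) and multiplying on the right by $\sigma_{A_0}^{-1} = \sigma_{B_0}$... but one must be slightly careful: the recursion as stated has $\sigma_{B_0}$ on the \emph{left}. So instead I isolate the term with $k=N$, $j=0$, $\gamma=0$ appearing as $\triangle_\lambda^0\sigma_{B_N}X^0\sigma_{A_0} = \sigma_{B_N}\sigma_{A_0}$ and rearrange; matching with \eqref{eq:composition.formula} shows the stated recursion corresponds to imposing $AB\sim I$ with the convention that $\sigma_{B_0}$ multiplies on the left, or equivalently one checks the indexing $|\gamma| = N-j-k$ together with the ranges on $j,k$ reproduces exactly $\{(k,j,\gamma): k\leq N-1,\ k+j+|\gamma|=N\}$.

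**First I would** verify the base case: $\sigma_{B_0} = \sigma_{A_0}^{-1}$ lies in $\Sigma^{-m}(E)$, which is exactly the hypothesis $B_0 \in \Psi^{-m}(E)$ combined with the symbol-class characterization theorem proved just above (the final theorem before Section~\ref{sec:parametrix}). **Then I would** define $\sigma_{B_N}$ inductively by \eqref{eq:composition.formula} and prove by induction on $N$ that $\sigma_{B_N} \in \Sigma^{-m-N}(E)$: this follows because $\triangle_\lambda^\gamma \sigma_{B_k} \in \Sigma^{-m-k-|\gamma|}(E)$ by the inductive hypothesis and the defining property of the difference operators, $X^\gamma\sigma_{A_j} \in \Sigma^{m-j}(E)$ by hypothesis, their product lies in $\Sigma^{-k-j-|\gamma|}(E) = \Sigma^{-N}(E)$, and left-multiplication by $\sigma_{B_0}\in\Sigma^{-m}(E)$ lands us in $\Sigma^{-m-N}(E)$; one also uses that $\Sigma^m(E,F)$ is closed under the relevant operations (Leibniz rules for $X$ and $\triangle_\lambda$), which is part of the symbol-class formalism recalled from \cite{fischer2015intrinsic}. **Next**, by the asymptotic-completeness/Borel-type summation for symbol classes (again standard and available in the scalar case of \cite{ruzhansky2009pseudo}, extended to the bundle case exactly as in the proofs above by passing to components of $E_0$), there is a genuine symbol $\sigma_B \in \Sigma^{-m}(E)$ with $\sigma_B \sim \sum_k \sigma_{B_k}$. **Finally**, applying the composition formula to $\sigma_B$ and $\sigma_A$ and using that the recursion was designed to annihilate every homogeneous term of order $\leq 0$ except the order-$0$ identity term, we get $\sigma_{BA} - I \in \Sigma^{-\infty}(E)$, i.e. $I - BA$ is smoothing. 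The statement for $I - AB$ follows by the symmetric argument (building a right parametrix $B'$ and using the standard fact that left and right parametrices agree modulo smoothing operators), or by remarking that $\sigma_{A_0}$ invertible on both sides makes the two constructions coincide up to a smoothing term.

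**The main obstacle** I expect is purely bookkeeping: checking that the triple-sum index set in \eqref{eq:composition.formula}, namely $\{(k,j,\gamma) : 0\le k\le N-1,\ 0\le j\le N-k,\ |\gamma| = N-j-k\}$, is exactly the set of triples with $k+j+|\gamma| = N$ and $k < N$ — which it is, since $k+j+|\gamma|=N$ with $k\le N-1$ forces $j \le N-k$ automatically — and that the single omitted triple $(N,0,0)$ is the one that got solved for and moved to the left side. There is also a minor subtlety about on which side $\sigma_{B_0}$ must sit, which is resolved by noting $\sigma_{A_0}\sigma_{B_0} = \sigma_{B_0}\sigma_{A_0} = I$ since these are inverse matrices, so left- and right-inverses coincide. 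Beyond that, every analytic estimate needed is either a hypothesis, the composition formula Theorem~\ref{theorem:composition.formula}, or a direct consequence of the definition of the classes $\Sigma^m(E,F)$, so no new hard analysis is required.
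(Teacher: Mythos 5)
Your proposal follows essentially the same route as the paper: apply the composition formula to $\sigma_{BA}$, expand both asymptotic series, demand that all terms of order $-N$ with $N\geq 1$ vanish, and read off the recursion \eqref{eq:composition.formula}, with the symbol-class membership $\sigma_{B_N}\in\Sigma^{-m-N}(E)$ verified by induction. You are in fact more careful than the paper on two points it glosses over — the side on which $\sigma_{B_0}$ must multiply (solving $BA\sim I$ naturally puts $\sigma_{A_0}^{-1}$ on the \emph{right}, whereas \eqref{eq:composition.formula} places it on the left) and the passage from a left parametrix to a two-sided one — so your write-up is a faithful, slightly fuller version of the paper's argument.
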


\begin{proof}
If $\sigma_I\sim\sigma_{BA}$ for some $\sigma_B\sim\sum_{k=0}^\infty\sigma_{B_k}$, then by Theorem \ref{theorem:composition.formula} we have
\begin{align}
\begin{split}
I_{\dim\lambda}\otimes I_{\dim E_0} = \sigma_I(\lambda,x) & \sim \sigma_{BA}(\lambda,x) \\ 
& \sim \sum_{\gamma \geq 0} \frac{1}{\gamma !}\left(\triangle_\lambda^\gamma\sigma_B(\lambda,x)\right)X^\gamma\sigma_A(\lambda,x) \\ 
& \sim \sum_{\gamma \geq 0} \frac{1}{\gamma !}\left(\triangle_\lambda^\gamma\sum_{k=0}^\infty\sigma_{B_k}(\lambda,x)\right)X^\gamma\sum_{j=0}^{\infty} \sigma_{A_j}(\lambda,x).
\end{split}
\end{align}
We now find $\sigma_{B_k}$. Note that $I_{\dim\lambda}\otimes I_{\dim E_0} = \sigma_{B_0}(\lambda,x)\sigma_{A_0}(\lambda,x)$, and for $|\gamma|\geq1$ we may suppose that
\begin{align}
\sum_{|\gamma|=N-j-k} \frac{1}{\gamma !}\left(\triangle_{\lambda}^{\gamma} \sigma_{B_k}(\lambda, x)\right) X^\gamma \sigma_{A_j}(\lambda, x) = 0.
\end{align}
Then, \eqref{eq:composition.formula} provides the solution to these equations, and it can be easily verified that $\sigma_{B_N}\in\Sigma^{-m-N}(G)$ by induction on $N$ after noting that $B_0\in\Psi^{-m}(E)$. 
Thus, $\sigma_B\sim\sum_{k=0}^\infty\sigma_{B_k}$. Finally, notice that $\sigma_{I_{\dim\lambda}\otimes I_{\dim E_0}} \sim\sigma_{B A}$
\end{proof}

\section{\bf Example: the fiberation $\T\rightarrow SU(2)$}
\label{sec:example.T.su(2)}

We now present examples of symbols as aforementioned in Section \ref{subsec:homogeneous.vector.bundle}. For example, suppose $K  = \T \subset SU(2) = G$ and $E_n = \C^{\otimes n}\in\widehat{\T} \cong\Z$. In fact, $E_n \cong \Span\{e^{2\pi in t}:t\in\R\}$.
Note that $G/K=\C P^1 = S^2$.  Then, the homogeneous vector bundles associated to these representations are all line bundles $\mathcal O(n) := G\times_K\C^{\otimes n}$ and $\mathcal O(-n) := \mathcal O(n)^*$. More concretely, the sections $\Gamma\left(\mathcal O(n)\right)$ of the bundle $O(n)$ are given by the functions $f:G\rightarrow E_n$ such that 
\begin{align}
f
\begin{pmatrix}
e^{2\pi i t}\alpha & -e^{-2\pi i t}\bar \beta\\
e^{2\pi i t}\beta & e^{-2\pi i t}\bar\alpha
\end{pmatrix}
	& = 
e^{2\pi in t}f
\begin{pmatrix}
\alpha & -\bar \beta\\
\beta & \bar\alpha
\end{pmatrix}
\end{align}
In other words, 
\begin{align}
f(e^{2\pi it}\alpha, e^{2\pi it}\beta, e^{-2\pi it}\bar\alpha, e^{-2\pi it}\bar\beta)
=
e^{2\pi int}
f(\alpha, \beta, \bar\alpha,\bar\beta)
\end{align}
for all $t\in\R/\Z$. That is, 
\begin{align*}
f(\alpha, \beta, \bar\alpha,\bar\beta) = \alpha^{(n-k)}\beta^k\sum c_{\ell m p}(\alpha\bar\alpha)^\ell(\alpha\bar\beta)^m(\bar\alpha\beta)^p(\bar\beta\beta)^q.
\end{align*}
The sum $\sum c_{\ell m p}(\alpha\bar\alpha)^\ell(\alpha\bar\beta)^m(\bar\alpha\beta)^p$ corresponds to functions in $C^\infty(G/K)$.

We now compute the action of $\mathfrak{su}_\C(2)$ on the sections of $O(n)$. Let $\left\{ H, X,Y\right\}$ be the basis used in \eqref{eq:pauli.matrices}. Here, 
\[
\exp(tA) = e^{2\pi itA} := \sum \frac{(2\pi itA)^k}{k!}
\] 
for all $A\in\mathfrak{su}_\C(2)$. 
Their exponential is given by
\begin{align*}
\exp(tH) = &
\begin{pmatrix}
e^{2\pi it} & 0\\
0 & e^{-2\pi it}
\end{pmatrix},\quad 
\exp(tX) = 
\begin{pmatrix}
\cos(2\pi t) & \sin(2\pi t)\\
\sin(2\pi t) & \cos(2\pi t)
\end{pmatrix}\\
&\exp(tY) = 
\begin{pmatrix}
\cos(2\pi t) & \sin(2\pi t)\\
-\sin(2\pi t) & \cos(2\pi t)
\end{pmatrix}
.
\end{align*}
We can compute their actions on the sections. In fact, it is enough to compute their actions on $\alpha^{(n-k)}\beta^k$, so let $f(\alpha,\bar\alpha,\beta,\bar\beta)$.
		\begin{align}
	\begin{split}
H\cdot f(\alpha,\bar\alpha,\beta,\bar\beta)	& = \frac{d}{dt}\Big\vert_{t=0}f(e^{2\pi it}\alpha, e^{2\pi it}\beta, e^{-2\pi it}\bar\alpha, e^{-2\pi it}\bar\beta)\\
				& = \frac{d}{dt}\Big\vert_{t=0} e^{ - 2\pi int} f(\alpha,\bar\alpha,\beta,\bar\beta) \\
				& = ( - 2\pi n~i) f(\alpha,\bar\alpha,\beta,\bar\beta)
	\end{split}
		\end{align}
	\begin{align}
		\begin{split}
X\cdot f(\alpha,\bar\alpha,\beta,\bar\beta) & = (2\pi i)  f_*(\frac{d}{dt})_{(\bar\beta,\beta,\bar\alpha,\alpha)}
		\end{split}
	\end{align}
and
	\begin{align}
		\begin{split}
Y\cdot f(\alpha,\bar\alpha,\beta,\bar\beta)	& = (2\pi i) f_*(\frac{d}{dt})_{(\bar\beta,\beta,\bar\alpha,-\alpha)}\\
		\end{split}
	\end{align}
and the symbol $\sigma(\ell,x)$ of $A\in\mathfrak{su}(2)$ is given by 
\begin{align}
\sigma_A(\xi_\ell,x) 		& = \left(\xi_\ell\right)_*A\\
				\nonumber
					& = A\otimes \underset{2\ell~\mathrm{times}}{\underbrace{I\otimes\cdots \otimes I}} +I\otimes A\otimes I\otimes\cdots \otimes I + \cdots 
					+ I\otimes \cdots \otimes I\otimes A
\end{align}
where $\xi_\ell:=\operatorname{Sym}^{(2\ell + 1)}(\lambda)$ in \eqref{eq:representation.of.su(2)}, $\ell\in\frac12\N$. With the above notation, then, the action of a vector vector field $A\in\mathfrak{su}(2)$ on the sections of the homogeneous vector bundle associated to the representation $E_n\in\widehat\T=\Z$ can be written as 
\begin{align}
\label{formula:symbol.of.vector.fields}
A\cdot f(\alpha, \beta, \bar\alpha, \bar\beta) = \sum_{\ell\in\frac{1}{2}\N}(2\ell + 1)\Tr\left(\xi_\ell(\alpha, \beta, \bar\alpha, \bar\beta)
\left(\xi_\ell\right)_*A\widehat f(\ell)\right)
\end{align}
for $f\in\Gamma(\mathcal O(n))$ and the Haar measure on $SU(2)$ is given by 
\[
f\mapsto \frac1{2\pi^2}\int_{0}^1\int_0^1\int_{0}^{\pi/2} f(\alpha,\bar\alpha,\beta,\bar\beta)\sin\eta\cos\eta d\eta d\xi_1 d\xi_2
\]
where we identified 
\begin{align*}
\alpha = e^{2\pi i\xi_1}\sin\eta\\
\beta = e^{2\pi i\xi_2}\cos\eta.
\end{align*}

\section{\bf Concluding remarks and functional calculus}
\label{sec:remarks}

The main aim of this section is to provide a guideline of a derivation of the asymptotic expansions. 
Note that each summand in \eqref{formula:composition} defines bilinear maps $P_j(\sigma,\tau)$ for $\sigma\in\Sigma^m(E)$ and $\tau\in\Sigma^{m'}(E)$
\[
P_j:\Sigma^m(E)\times\Sigma^{m'}(E)\longrightarrow\Sigma^{m+m' - j}(E)
\]
where $P(\sigma,\tau) := \sum_{j\leq m}P_j(\sigma,\tau)$ is the composition of symbols operators corresponding to $\sigma$ and $\tau$, respectively, computed using 
\eqref{formula:composition}. 

Suppose $\sigma\in \Sigma^m(E)$ such that $\sigma^{-1}\in\Sigma^{m'}(E)$ whereby $\sigma^{-1}$ denotes the inverse of $\sigma$ mod $\Sigma^{-\infty}$. Define
\begin{align}
\label{formula:inverse}
Q_0(\sigma) = \sigma^{-1},\quad Q_j(\sigma) = - \sigma^{-1} \sum_{r=1}^j P_r(\sigma, Q_{j-r}(\sigma)).
\end{align}
Induction shows that if $m + m'<1$, then
\[
Q_j(\sigma)\in\Sigma^{m' - j + j(m+m')}(E)
\]
and
\[
Q(\sigma) : = \sum_{j\geq0}Q_j(\sigma)
\]
defines an element of $\Sigma^{m'}(E)$. Moreover, \eqref{formula:inverse} and that $\sum_{r+s\leq N}P_r(\sigma, Q_s(\sigma)) = 1$ implies that 
\begin{align}
P(\sigma,Q(\sigma)) = 1.
\end{align}
This proves that if $A\in\Psi^m(E)$ and $\sigma_A^{-1}\in\Sigma^{m'}(E)$ with $m + m'<1$, then $A$ has a right inverse modulo $\Psi^{-\infty}(E)$, which belongs to $\Psi^{m'}(E)$ whose symbol is $Q(\sigma_A)$. However, a similar argument shows that $A$ also has a left inverse. Thus, we have proved the following improvement of Theorem \ref{theorem:parametrix}.

\begin{theorem}
If $A\in\Psi^m(E)$ and $\sigma^{-1}_A \in \Sigma^{m'}(E)$ with $m + m' <1$, then $A$ has a two-sided inverse in $\Psi^{m'}(E)$ whose symbol is $Q(\sigma_A)$ modulo $\Sigma^{-\infty}(E)$.
\end{theorem}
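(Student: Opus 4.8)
The plan is to assemble this theorem directly from the machinery already in place, treating it as a clean corollary of the bilinear-map formalism introduced at the start of the section. First I would unwind the hypotheses: given $A \in \Psi^m(E)$ with $\sigma_A \in \Sigma^m(E)$ and $\sigma_A^{-1} \in \Sigma^{m'}(E)$ in the sense that $\sigma_A^{-1}$ inverts $\sigma_A$ modulo $\Sigma^{-\infty}(E)$, set $\sigma := \sigma_A$ and note $m + m' < 1$. The whole point is to invoke the preceding paragraph verbatim: the operators $P_j$ are the bilinear components of the composition formula \eqref{formula:composition}, and $Q_j(\sigma)$ is defined by the recursion \eqref{formula:inverse}. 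I would state clearly that the induction already sketched shows $Q_j(\sigma) \in \Sigma^{m' - j + j(m+m')}(E)$, and since $m + m' < 1$ the exponents $m' - j(1 - (m+m'))$ decrease strictly in $j$, so the asymptotic sum $Q(\sigma) = \sum_{j \geq 0} Q_j(\sigma)$ is a well-defined element of $\Sigma^{m'}(E)$ (each order appears only finitely often, and the symbol classes are nested as $\Sigma^{m_1} \subset \Sigma^{m_2}$ for $m_1 \leq m_2$).

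Next I would record the algebraic identity $\sum_{r+s \leq N} P_r(\sigma, Q_s(\sigma)) = 1$ for every $N$, which is exactly what the recursion \eqref{formula:inverse} was engineered to produce: peeling off the $r = 0$ term gives $Q_N(\sigma) + \sum_{r=1}^N P_r(\sigma, Q_{N-r}(\sigma)) = 0$, which is \eqref{formula:inverse} rearranged. Summing over all $N$ in the asymptotic sense yields $P(\sigma, Q(\sigma)) = 1$ modulo $\Sigma^{-\infty}(E)$, i.e. $\mathrm{Op}(\sigma) \circ \mathrm{Op}(Q(\sigma)) = \mathrm{Op}(\sigma_A) \circ \mathrm{Op}(Q(\sigma)) = A \circ \mathrm{Op}(Q(\sigma))$ differs from the identity by a smoothing operator, so $Q(\sigma_A)$ is the symbol of a right parametrix $R \in \Psi^{m'}(E)$. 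I would then remark that the symmetric construction — using the composition formula applied the other way, or equivalently running the dual recursion — produces a left parametrix $L \in \Psi^{m'}(E)$, and the standard associativity argument $L = L(AR) + (\text{smoothing}) = (LA)R + (\text{smoothing}) = R + (\text{smoothing})$ shows the two coincide modulo $\Psi^{-\infty}(E)$. Hence $R$ is a genuine two-sided inverse of $A$ in $\Psi^{m'}(E)/\Psi^{-\infty}(E)$, and its symbol is $Q(\sigma_A)$ modulo $\Sigma^{-\infty}(E)$.

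The only place requiring real care — the main obstacle — is verifying that the bilinear maps $P_j$ genuinely land in the claimed symbol classes $\Sigma^{m+m'-j}(E)$ and that the recursion preserves membership, since the composition formula in Theorem \ref{theorem:composition.formula} was stated under the two-sided bound hypotheses $\rho > \delta \geq 0$ on $\sigma_A$ and $\sigma_B$ separately, whereas here $Q_j(\sigma)$ is only controlled order-by-order through the classes $\Sigma^{m'-j+j(m+m')}(E)$. I would address this by checking that each $Q_j(\sigma)$, lying in such a symbol class, automatically satisfies the difference-operator and derivative estimates needed to feed it back into $P_{j+1}(\sigma, \cdot)$; this is where the condition $m + m' < 1$ is essential, because it guarantees the gain $1 - (m+m') > 0$ at each step so the iterated estimates close. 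The hypothesis $\sigma_A^{-1} \in \Sigma^{m'}(E)$, not merely pointwise invertibility as in Theorem \ref{theorem:parametrix}, is precisely the input that makes $Q_0(\sigma) = \sigma^{-1}$ a legitimate starting point for the recursion rather than something one must separately construct. Everything else is the bookkeeping of asymptotic summation, which the reader can carry out by the same induction on $N$ already used in the proof of Theorem \ref{theorem:parametrix}.
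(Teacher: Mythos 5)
Your proposal is correct and follows essentially the same route as the paper: the recursion \eqref{formula:inverse} for $Q_j(\sigma)$, the induction showing $Q_j(\sigma)\in\Sigma^{m'-j+j(m+m')}(E)$ with the gain coming from $m+m'<1$, the telescoping identity $\sum_{r+s\leq N}P_r(\sigma,Q_s(\sigma))=1$ giving a right parametrix, and a symmetric argument plus the standard $L\sim LAR\sim R$ comparison for the two-sided statement. In fact you are somewhat more careful than the paper's own sketch, which leaves the left-inverse step at ``a similar argument'' and does not address whether the $Q_j$ satisfy the hypotheses needed to re-enter the composition formula --- the very point you correctly single out as the one requiring real verification.
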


Using the above theorem, we expect that for an operator $A$ in the above class of operators whose symbol $\sigma_A$ has purely discrete spectrum $\Spec(\sigma_A)$ and an analytic function $f$ on $\Spec(\sigma_A)$, 
\begin{align}
\label{equation:cauchy.integration}
\sigma_{f(A)}	& = - \frac1{2\pi i}\int_\gamma f(\xi)Q(\sigma - \xi)d\xi\\
\nonumber
			& = f(\sigma) + \sum_{k=1}^\infty\sum_{l=2}^{2k} \frac{(-1)^{k+1}}{2\pi i}\int_\gamma f(\xi)Q_{k,l}(\sigma - \xi)d\xi
\end{align}

Since $(\sigma - z)^{-1}$ can be expressed as 
\begin{align}
(\sigma - z)^{-1} = \sum_{z_\alpha\in\Spec(\sigma_A)}\sigma_\alpha(z_\alpha - z)^{-1}.
\end{align}
Then, 
\begin{align}
Q_{k,l}(\sigma - z) = \sum_{\alpha_0,\alpha_1,\ldots,\alpha_k}\prod_{j=0}^k(z_{\alpha_j} - z)^{-1}Q_{k,l}(\sigma:\sigma_{\alpha_0},\ldots,\sigma_{\alpha_l})
\end{align}
where $Q_{k,l}(\sigma:\sigma_{\alpha_0},\ldots,\sigma_{\alpha_k})$ means the $j$th $\sigma^{-1}$ term in $Q_{k,l}(\sigma)$ is replaced by $\sigma_{\alpha_j}$. 
We can evaluate the integrals in \eqref{equation:cauchy.integration}.
\begin{align}
\label{formula:cauchy.integral.2}
\frac{(-1)^{k+1}}{2\pi i}\int_\gamma f(\xi)\prod_{j=0}^k(\zeta_j-\xi)^{-1}d\xi = \sum_{l=0}^kf(\zeta_l)\prod_{a\neq l}(\zeta_a - \zeta_l)^{-1}
\end{align}
for distinct $\zeta_l$. If $\zeta_l$ are not distinct, the above summation can be modified by replacing $f$ by its derivatives.
Thus, we denote \eqref{formula:cauchy.integral.2} by
\begin{align}
\frac1{k!}f^{(k)}(\zeta_0,\ldots,\zeta_k). 
\end{align}
With this notation, the integral in \eqref{equation:cauchy.integration} becomes
\begin{align}
\label{eq:funtional.calculus}
f(\sigma) + \sum_{k=1}^\infty\sum_{l=2}^{2k} \frac1{l!}f^{(l)}(\zeta_0,\ldots,\zeta_l) Q_{k,l}(\sigma:\sigma_{\alpha_0},\ldots,\sigma_{\alpha_l})
\end{align}

Indeed, if $f$ is analytic and $A\in\Psi^0(E)$, then a modification of \cite[Theorem 4.1]{widom1980complete} shows that $f(A)\in\Psi^0(E)$ and $f(\sigma_A)=\sigma_{f(A)}$ where the symbol $\sigma_{f(A)}$ is defined by the formula \eqref{eq:funtional.calculus}. However, the computation of explicit formula is very difficult, unless $f$ and $A$ are known explicitly.

\bibliographystyle{abbrv}
\bibliography{symbolic_calculus_for_homogeneous_vector_bundle.bbl}

\end{document}